\documentclass{elsarticle}
\usepackage[utf8]{inputenc}
\usepackage{amssymb,amsmath}
\usepackage{amsthm}
\usepackage{graphicx}
\usepackage{color}
\usepackage{tikz}
\usepackage{pgfplots}
\pgfplotsset{compat=1.16}
\usepackage{enumitem}
\usepackage{setspace}
\usepackage{mathtools}
\usepackage{xfrac}	
\usepackage{siunitx}

\usepackage{multicol} 
\setlength{\multicolsep}{3.0pt plus 1.0pt minus 0.75pt}
\usepackage{soul}
\usepackage{subcaption}
\usepackage{booktabs}
\usepackage{array}
\newcolumntype{L}[1]{>{\raggedright\let\newline\\\arraybackslash\hspace{0pt}}m{#1}}
\newcolumntype{C}[1]{>{\centering\let\newline\\\arraybackslash\hspace{0pt}}m{#1}}
\newcolumntype{R}[1]{>{\raggedleft\let\newline\\\arraybackslash\hspace{0pt}}m{#1}}
\usepackage[linesnumbered, ruled,vlined]{algorithm2e}
\usepackage{changepage}
\usetikzlibrary{spy}

\usetikzlibrary{intersections}

\usepackage[hidelinks]{hyperref}

\mathtoolsset{showonlyrefs}

\makeatletter
\newcommand{\smallsym}[2]{#1{\mathpalette\make@small@sym{#2}}}
\newcommand{\make@small@sym}[2]{%
	\vcenter{\hbox{$\m@th\downgrade@style#1#2$}}%
}
\newcommand{\downgrade@style}[1]{%
	\ifx#1\displaystyle\scriptstyle\else
	\ifx#1\textstyle\scriptstyle\else
	\scriptscriptstyle
	\fi\fi
}
\makeatother

\newcommand{\supp}{\mathrm{supp}}

\newcommand{\opnorm}[1]{{\left\vert\kern-0.25ex\left\vert\kern-0.25ex\left\vert #1 
	\right\vert\kern-0.25ex\right\vert\kern-0.25ex\right\vert}}

\definecolor{myBlue}{RGB}{30,144,255}
\definecolor{myGreen}{RGB}{69,169,0}
\definecolor{myRed}{RGB}{165,12,42} 
\definecolor{myOrange}{RGB}{225,92,22} 
\definecolor{color2}{RGB}{255, 126, 126}
\definecolor{color3}{RGB}{0, 100, 0}
\definecolor{color1}{RGB}{176, 226, 255}

\tikzset{cross/.style={cross out, draw=black, minimum size=2*(#1-\pgflinewidth), inner sep=0pt, outer sep=0pt},
cross/.default={0.6ex}}


\newtheorem{theorem}{Theorem}[section]
\newtheorem{proposition}[theorem]{Proposition}
\newtheorem{lemma}[theorem]{Lemma}
\newtheorem{corollary}[theorem]{Corollary}

\theoremstyle{definition}
\newtheorem{definition}[theorem]{Definition}
\newtheorem{assumption}[theorem]{Assumption}
\newtheorem{remark}[theorem]{Remark}
\newtheorem{example}[theorem]{Example}

\pgfplotstableset{
every head row/.style={before row=\toprule, after row=\midrule},
every last row/.style={after row=\bottomrule},
}

\numberwithin{theorem}{section}
\numberwithin{equation}{section}
\numberwithin{table}{section}
\numberwithin{figure}{section}
\textheight=215mm
\textwidth=150mm
\evensidemargin=30.0mm
\oddsidemargin=30.0mm
\topmargin=-1mm
\hoffset=-25.4mm
\allowdisplaybreaks
%


\newcommand{\hV}{\hat{V}}
\newcommand{\hVH}{\hat{V}_H}
\newcommand{\Vms}{V^{\mathrm{ms}}_H}
\newcommand{\VmsHk}{V^{\mathrm{ms}}_{H,k}}
\newcommand{\VH}{V_H}

\renewcommand{\TH}{\mathcal{T}_H}

\newcommand{\bV}{V}

\newcommand{\bv}{v}
\newcommand{\vd}[1]{{v}_{(#1)}}

\newcommand{\bL}{L}

\newcommand{\bK}{K}

\newcommand{\ddu}{D^2_t{u}}
\newcommand{\dddu}{D^3_t{u}}
\newcommand{\ddddu}{D^4_t{u}}

\newcommand{\zHk}{z_{H,k}}
\newcommand{\zH}{z_{H}}
\newcommand{\zms}{z^{\mathrm{ms}}}
\newcommand{\zmsHk}{z^{\mathrm{ms}}_{H,k}}
\newcommand{\vHk}{v_{H,k}}
\newcommand{\vH}{v_{H}}

\newcommand{\rhok}{\rho_k}

\newcommand{\hf}{\hat{f}}
\newcommand{\ms}{\mathrm{ms}}

\newcommand{\tn}{t_n}
\newcommand{\tnhp}{t_{n+1/2}}
\newcommand{\tnhm}{t_{n-1/2}}
\newcommand{\tnnext}{t_{n+1}}
\newcommand{\tnprev}{t_{n-1}}
\newcommand{\ti}{t_i}
\newcommand{\tihp}{t_{i+1/2}}
\newcommand{\tihm}{t_{i-1/2}}
\newcommand{\tinext}{t_{i+1}}
\newcommand{\tiprev}{t_{i-1}}
\newcommand{\tohp}{t_{1/2}}
\newcommand{\unH}{u^n_H}
\newcommand{\unHhp}{u^{n+1/2}_H}
\newcommand{\unHhm}{u^{n-1/2}_H}

\newcommand{\unhp}{u^{n+1/2}}
\newcommand{\unhm}{u^{n-1/2}}
\newcommand{\un}{u^n}
\newcommand{\unprev}{u^{n-1}}
\newcommand{\unnext}{u^{n+1}}

\newcommand{\uo}{u^0}
\newcommand{\uhp}{u^{1/2}}
\newcommand{\ui}{u^i}
\newcommand{\uiprev}{u^{i-1}}

\newcommand{\bdiscd}{\bar{\partial}_t}
\newcommand{\fdiscd}{\partial_t}
\newcommand{\omni}{\omega^n_1}
\newcommand{\omnii}{\omega^n_2}
\newcommand{\omii}{\omega^i_1}
\newcommand{\omiii}{\omega^i_2}
\newcommand{\thn}{\theta^n}

\newcommand{\thnk}{\theta_k^n}

\newcommand{\thnkhm}{\theta_k^{n-1/2}}
\newcommand{\thnkhp}{\theta_k^{n+1/2}}
\newcommand{\thok}{\theta_k^0}
\newcommand{\thkhp}{\theta_k^{1/2}}

\newcommand{\rhonk}{\rho^n_k}
\newcommand{\rhonkhp}{\rho^{n+1/2}_k}

\newcommand{\rhoik}{\rho^i_k}
\newcommand{\fn}{f^n}
\newcommand{\ddf}{D^2_t{f}}

\newcommand{\Qnddu}{\Theta^{n}_{D^2_tu}}

\newcommand{\Qiddu}{\Theta^{i}_{D^2_tu}}

\newcommand{\Qndddduddf}{\Theta^n_{D^4_tu-D^2_tf}}
\newcommand{\Qidddduddf}{\Theta^i_{D^4_tu-D^2_tf}}

\newcommand{\unmsH}{u^{\mathrm{ms},n}_{H}}
\newcommand{\unmsHhp}{u^{\mathrm{ms},n+1/2}_{H}}
\newcommand{\unmsHhm}{u^{\mathrm{ms},n-1/2}_{H}}
\newcommand{\unmsHk}{u^{\mathrm{ms},n}_{H,k}}
\newcommand{\unmsHkhp}{u^{\mathrm{ms},n+1/2}_{H,k}}
\newcommand{\unmsHkhm}{u^{\mathrm{ms},n-1/2}_{H,k}}

\newcommand{\enhp}{e^{n+1/2}}

\newcommand{\ddt}{\frac{\mathrm{d}}{\mathrm{d}t}}
\newcommand{\Rms}{R^\mathrm{ms}}
\newcommand{\Rmsk}{R^\mathrm{ms}_k}
\newcommand{\Qk}{\mathcal{Q}_k}

\newcommand{\PH}{P_H}
\newcommand{\cI}{\mathcal{I}}
\newcommand{\sumin}{\sum_{i=1}^n}

\newcommand{\intt}{\int_0^t}

\newcommand{\dt}{\mathrm{d}t}
\newcommand{\ds}{\mathrm{d}s}

\newcommand{\knorm}[1]{\big| #1 \big|_K}
\newcommand{\lnorm}[1]{\big| #1 \big|_L}
\newcommand{\mnorm}[1]{\big| #1 \big|_M}

\newcommand{\intmnorm}[1]{\bigg| #1 \bigg|_M}

\newcommand{\lonemnorm}[1]{\big| #1 \big|_{L^1(M)}}
\newcommand{\linfmnorm}[1]{\big| #1 \big|_{L^\infty(M)}}
\newcommand{\loneknorm}[1]{\big| #1 \big|_{L^1(K)}}
\newcommand{\linfknorm}[1]{\big| #1 \big|_{L^\infty(K)}}


\newcommand{\calG}{\mathcal{G}}
\newcommand{\calN}{\mathcal{N}}
\newcommand{\calE}{\mathcal{E}}
\newcommand{\calQ}{\mathcal{Q}}
\newcommand{\Rd}{\mathbb{R}^d}

\newcommand{\com}[1]{{\color{black} #1}}

\begin{document}
\title{Multiscale methods for solving wave equations on spatial networks}

\author[1,2]{Morgan G\"ortz\fnref{fn1}}
\ead{morgan.gortz@fcc.chalmers.se}

\author[2]{Per Ljung\corref{cor1}}
\ead{perlj@chalmers.se}

\author[2]{Axel M{\aa}lqvist\fnref{fn2}}
\ead{axel@chalmers.se}

\cortext[cor1]{Corresponding author}

\fntext[fn1]{Supported by the Swedish Foundation for Strategic Research (SSF)}
\fntext[fn2]{Supported by the G\"oran Gustafsson Foundation for Research in Natural Sciences and Medicine and the Swedish Research Council (project number 2019-03517)}
\fntext[fn3]{The computations were performed on resources provided by the Swedish National Infrastructure for Computing (SNIC) at C3SE.}

\affiliation[1]{organization={Computational Engineering
		and Design, Fraunhofer-Chalmers Centre},
	addressline={Sven Hultins gata 9},
	postcode={412 58},
	city={Gothenburg},
	country={Sweden}}

\affiliation[2]{organization={Department of Mathematical Sciences, Chalmers University of Technology and University of Gothenburg},
	addressline={Chalmers tvärgata 3},
	postcode={412 58},
	city={Gothenburg},
	country={Sweden}}

\date{\today}
%
%
\begin{abstract}

We present and analyze a multiscale method for wave propagation problems, posed on spatial networks. By introducing a coarse scale, using a finite element space interpolated onto the network, we construct a discrete multiscale space using the localized orthogonal decomposition (LOD) methodology. The spatial discretization is then combined with an energy conserving temporal scheme to form the proposed method. Under the assumption of well-prepared initial data, we derive an a priori error bound of optimal order with respect to the space and time discretization. In the analysis, we combine the theory derived for stationary elliptic problems on spatial networks with classical finite element results for hyperbolic problems. Finally, we present numerical experiments that confirm our theoretical findings.

\end{abstract}
%
%
\maketitle
%

\section{Introduction}\label{s:intro}

The modeling of physical phenomena using partial differential equations (PDEs) is a major topic in science and engineering. Numerical simulation of these models is often challenging due to their great complexity. In some applications it is possible to introduce a so called spatial network model in order to reduce complexity, while still maintaining the main features of the original model. Flow in porous rock formations \cite{ChuEP12} and modeling of fiber based elastic materials \cite{IliLW10} are two examples where this technique is commonly applied. More specifically, if we consider a fiber based material consisting of three dimensional cylinders, we may replace these by one dimensional beams that together form a web like spatial network, defined by nodes, edges and edge weights. However, even if the complexity of the model is significantly reduced, there are still challenges present when solving equations on the resulting spatial network, in particular if the corresponding network data (edge weights) are rapidly varying in space. For the fiber based model, the network inherits such variations from the geometry and distribution of fibers. In the case of flow through a porous medium, crack formations and variation in rock permeability  contributes to such large spatial variations in the network data.

In this paper, we study wave propagation on spatial networks. We consider a symmetric system matrix $K$, related to a weighted graph Laplacian with high data variation, and a diagonal mass matrix $M$. The displacement $u$ fulfills
\begin{align}
	MD_t^2 u+Ku=Mf,
\end{align}
given some right-hand side $f$ and appropriate initial and boundary data. This equation can, e.g.,~arise from a finite element discretization of the acoustic wave equation posed on a web of connected one dimensional line segments, see Figure \ref{fig:mean and std of nets}. If we allow vector valued solutions it may model elastic wave propagation on a fiber based material. Prior to our work, wave equations posed on graphs have been studied thoroughly in the literature. For instance, existence and uniqueness of solutions to linear and non-linear wave equations on graph networks are studied in \cite{LinX19, LinX22} and solution properties such as eigenvalues, stability, periodicity and propagation speed, are discussed in, e.g., \cite{Sch09, Kha18}. In \cite{Tay99}, the importance of elastic wave modeling in porous medium for seismic exploration is emphasized, and the author defines a pore network model for the analysis of wave properties. 

From a numerical point of view, complications arise when the micro-scale features in the network model affect the solution on the macro scale, and must be resolved in order to obtain accurate approximations. In the research on numerical methods for solving partial differential equations, there have been extensive investigations into how to circumvent this problem, mainly by so-called multiscale methods. In short, multiscale methods aim to construct coarse-scale spaces whose approximation properties are improved in comparison to classical finite element spaces. This is generally achieved by incorporating problem-dependent information on a fine scale into the coarse-scale space. Notable examples of such methods include Generalized multiscale finite element method (GMsFEM) \cite{BabL11, BabO83, EfeGH13}, gamblets \cite{Owh17}, the Localized Orthogonal Decomposition (LOD) method \cite{HenP13, MalP14}, and the Super-LOD method \cite{HauP21, FreHP21}. For multiscale methods specifically designed for wave equations, we refer to \cite{AbdG11, EngHR11, ArjR14, AbdGJ18, GaoFC18}, and for more information on techniques based on numerical homogenization in general, see, e.g., \cite{MalP20, AltHP21, OwhS19}.

The purpose of this paper is to construct and analyze a LOD based method that efficiently and accurately approximates solutions to a linear wave equation, posed on a spatial network. LOD is by now a well-established methodology in the area of multiscale numerical analysis. It was first introduced in \cite{MalP14} and has since then been further developed for various types of equations, such as elliptic-type in \cite{HenM14, HenMP14} and parabolic-type in \cite{MalP17, MalP18, LjuMM22, AltCMPP20}. In particular, the LOD technique for wave equations is analyzed in \cite{LjuMP20, AbdH17, PetS17, MaiP19, GeeM21}, and its adaptation to spatial network models was first considered in \cite{KetMMFWE20}, with theoretical justifications in \cite{EdeGHKM22}. The LOD method is based on a decomposition of the solution space into a coarse-scale and a fine-scale part respectively. In a PDE setting, the coarse and fine scales appears naturally using nested finite element spaces. Finding coarse scales in spatial networks is less obvious. In \cite{EdeGHKM22} and \cite{GorHM22}, the authors address this issue, and find a solution by introducing an artificial coarse scale with minimum assumptions on its relation to the network. By incorporating fine scale features into the coarse-scale space, we obtain a modified space with enhanced approximation properties. We remark that the construction of the modified space is feasible in terms of computational complexity, but challenging due to the computations made on the fine scale. However, for time-dependent problems, the LOD technique is significantly enhanced, in the sense that the fine-scale features need only be solved for once, and can then be re-used in each time step.

In combination with the spatial discretization, an energy conserving temporal scheme is deployed to construct the fully discretized method. For the proposed method, we state and prove a priori optimal order convergence with respect to both space and time, under the assumption of so called well-prepared initial data. To derive a priori error bounds, we utilize the theory established for stationary elliptic spatial network models, \cite{EdeGHKM22, GorHM22}, and combine these results with numerical PDE techniques for hyperbolic problems. We further present numerical experiments that confirm our theoretical findings. In a final numerical example we consider elastic wave propagation in a spatial network modeling a fiber based material.

The paper is organized as follows. In Section \ref{s:notation} we discuss the network setting, including notation and assumptions. Section \ref{s:model_problem} introduces the linear wave equation on the spatial network as our model problem, and proves regularity results for the solution. The derivation of the novel method is presented in Section \ref{s:localized_orthogonal_decomposition}, and corresponding error analysis is done in Section \ref{s:error_estimates}. Finally, we provide numerical examples that confirm our theoretical findings in Section \ref{s:numerical_examples}. 

\section{Network notation and assumptions}\label{s:notation}

Throughout the paper, we consider a network setting similar to the one presented in \cite{EdeGHKM22, GorHM22}. That is, we consider a spatial network represented by the connected graph $\calG = (\calN, \calE)$, where $\calN \subset \Rd$ is a finite set of nodes, and
\begin{align}
	\calE = \big\{\{x, y\} : \text{ $x$ and $y$ are connected by an edge}\big\}
\end{align}
is an edge set of unordered edge pairs. We introduce the notation $x\sim y$ to imply that $\{x, y\} \in \calE$. In such cases, we call $x$ and $y$ adjacent nodes, with $|x-y|$ denoting the length (Euclidean norm) of their connecting edge. The considered network is embedded into a spatial domain $\Omega \subset \Rd$, where we denote by $\Gamma \subseteq \partial \Omega$ the set of nodes on the boundary to which we apply a Dirichlet type condition. The set $\Gamma$ is assumed to be non-empty, i.e., it contains at least one node. Moreover, for simplicity, we assume a hypercube domain $\Omega = [0,1]^d$. Let $\hV$ denote the space of real functions defined on $\calN$, and let $V$ be the function space $\hV$ with imposed homogeneous Dirichlet boundary condition, i.e.,
\begin{align}
	V := \{v\in \hV: v(x) = 0,\ x\in \Gamma\}.
\end{align}
Given a subset $\omega \subset \Omega$, we define the subset of nodes contained in $\omega$ as $\calN(\omega) := \calN \cap \omega$, where we will use the abbreviation $\calN(x) = \calN(\{x\})$ for any node $x\in \calN$. The standard inner product on $\hV$ on a subset $\omega$ is then defined as
\begin{align}
	(u,v)_\omega := \sum_{x\in \calN(\omega)} u(x) v(x),
\end{align}
with $(\cdot,\cdot) = (\cdot, \cdot)_\Omega$. 

We continue by defining some crucial operators that are frequently used throughout the paper. First of all, for $x\in \calN$, let $M_x : \hV \rightarrow \hV$ be the diagonal linear operator defined by
\begin{align}
	(M_xv,v) = \frac{1}{2}\sum_{y\sim x} |x - y|v(x)^2.
\end{align}
For subdomains $\omega \subset \Omega$, we abbreviate $M_\omega:= \sum_{x\in \calN(\omega)} M_x$, and for the full domain we set $M := M_\Omega$. The operator $M$ further defines a norm, namely $\mnorm{v} := (Mv,v)^{1/2}$. Here, one can interpret $\mnorm{1}^2$, i.e., the squared $M$-norm of the constant function $1\in \hV$, as the mass of the network. 

We introduce the reciprocal edge-length weighted graph Laplacian as the operator $L_x : \hV \rightarrow \hV$ defined by
\begin{align}
	(L_xv,v) = \frac{1}{2} \sum_{y\sim x} \frac{\big(v(x) - v(y) \big)^2}{|x - y|},
\end{align}
with similar abbreviations $L_\omega:= \sum_{x\in \calN(\omega)}L_x$ and $L := L_\Omega$. The weighted Laplacian $L$ is a symmetric and positive semi-definite operator, and hence defines the semi-norm $\lnorm{v} := (Lv,v)^{1/2}$.

At last, we introduce the symmetric and positive semi-definite linear operator $$K: V\rightarrow V,$$ which is to be used in our model problem. We assume $K$ to be bounded and coercive with respect to the operator $L$, i.e.,
\begin{align}
\alpha (\bL \bv, \bv) \leq (\bK \bv, \bv) \leq \beta (\bL \bv, \bv), \quad \forall \bv \in \bV,
\label{eq:KL_relation}
\end{align}
where $0 < \alpha \leq \beta < \infty$. We further assume that $K$ is local in the sense that it can be written as $K=\sum_{x\in \mathcal{N}} K_x$, with $K_x:\hat V\rightarrow \hat V$ being symmetric, positive semi-definite, with support on nodes adjacent to $x$. Since $\Gamma \neq \emptyset$, i.e., there is at least one fixed node, and since $\calG$ is connected, it holds that the operator $L$, and therefore also $K$, is invertible. The operator $\bK$ moreover defines the norm $\knorm{\bv} := (\bK \bv, \bv)^{1/2}$.

\begin{example} \label{example:weighted_laplacian}
	Let $K = \sum_{x\in \calN} K_x$, where
	\begin{align}
	(K_x v, v) = \frac{1}{2}\sum_{y\sim x} \gamma_{xy} \frac{(v(x) - v(y))^2}{|x - y|}.
	\end{align}
	Here, $\gamma_{xy} \in (0, \infty)$ is a material parameter for the connecting edge $\{x,y\}$, e.g., heat conductivity in the case of heat transfer. This operator $K$ satisfies all assumptions made above, with $\alpha = \min_{x,y\in\calN} \gamma_{xy} > 0$ and $\beta= \max_{x,y\in \calN} \gamma_{xy}$. We note that $K$ corresponds to a one dimensional finite element stiffness matrix with a varying diffusion parameter $\gamma$ that is piecewise constant on the edges of the network. The matrix $M$ corresponds to the lumped one dimensional finite element mass matrix.
\end{example}

\begin{remark} \label{remark:vec-not}
	The model and corresponding analysis presented in this paper can further be extended to vector valued functions, by letting 
	\begin{align}
	\hat{\mathbf{V}} := \hat{V}^n  = \underbrace{\hat{V}\times\cdots \times \hat{V}}_{n \text{ times}}
	\end{align}
	with elements written as $\mathbf{v}= [\vd{1}, \ldots, \vd{n}]$, where $\vd{j}\in \hV$ for $j=1,\ldots,n$, and further defining operators $\mathbf{M}$ and $\mathbf{L}$ by applying corresponding scalar valued operators component-wise. There is also the possibility to define a vector-valued operator $\mathbf{K}$ to model, e.g., elastic waves. However, we remark that a vector-valued operator has a larger kernel, and therefore requires more nodes in $\Gamma$ in order to be invertible. For more information on the vector valued setting, we refer to the work done in \cite{GorHM22} and \cite{EdeGHKM22}. We return to this setting in the final numerical example in Section \ref{s:numerical_examples}.
\end{remark}

For the error analysis, it will be convenient to use the Bochner spaces, with norms defined as
\begin{align}
	\big| v \big|_{L^1(0,T;\mathcal{B})} &:= \int_0^T \big| v(t) \big|_\mathcal{B}\, \dt, \\
	\big| v \big|_{L^\infty(0,T;\mathcal{B})} &:= \max_{[0,T]} \big| v(t) \big|_\mathcal{B},
\end{align}
where $\mathcal{B}$ will be replaced by $M$, $L$, or $K$. The Bochner space norms will be commonly abbreviated by omitting the temporal interval, such that we write, e.g., $L^1(M) := L^1(0,T;M)$.

At last, we state necessary properties of the network for the analysis to hold. For this purpose, we first introduce boxes $B_R(x)\subset \Omega$, centered at $x = (x_1, \ldots, x_d)$ with length $2R$. These are constructed by letting 
\begin{align}
\tilde{B}_R(x) := [x_1 - R, x_1 + R) \times \cdots \times [x_d - R, x_d + R),
\end{align}
and then defining
\begin{align}
B_R(x) := \tilde{B}_R(x) \cup \big( \overline{\tilde{B}_R(x)}\cup \partial \Omega \big).
\label{eq:BR_definition}
\end{align}
In this way, $B_R(x)$ does not include its upper bound in any dimension, except for the cases when the boundary is part of the global boundary $\partial \Omega$. We are now prepared to introduce following assumptions on the network.

\begin{assumption}[Network properties]
	\label{ass:network_properties}
	There is a homogeneity parameter, $R_0$, on the coarse length-scale, such that the network satisfies
	\begin{enumerate}[label=\arabic*.]
		\setlength{\itemsep}{9pt}
		\item (homogeneity) for all $R \geq R_0$ and any $x\in \Omega$, there is a uniformity constant $\sigma$ and a network density $\rho$, such that
		\begin{align}
			\rho \leq (2R)^{-d}\big| 1 \big|^2_{M,B_R(x)} \leq \sigma\rho,
		\end{align}
		\item (connectivity) for all $R\geq R_0$, $x\in \Omega$, there exists a connected subgraph $\bar{\calG} = (\bar{\calN}, \bar{\calE})$, containing\\[-0.3cm]
		\begin{enumerate}[label=(\alph*)]
			\setlength{\itemsep}{3pt}
			\item all edges with at least one endpoint in $B_R(x)$,
			\item no edges with at least one endpoint not in $B_{R+R_0}(x)$,
		\end{enumerate}
		\item (locality) the maximum edge length is smaller than $R_0$, i.e.,
		\begin{align}
			\max_{\{x,y\}\in \calE} |x - y| < R_0,
		\end{align}
		\item (boundary density) for all $y\in \Gamma$, there exists $x\in \calN(\Gamma)$ such that $|x-y| < R_0$.
	\end{enumerate}
\end{assumption}

By assuming homogeneity, we require the density of a small portion of network, i.e., on the $R_0$-scale, to be comparable with the density of the network on the entire domain $\Omega$. Moreover, the locality assumption prohibits the network from having single edges connecting over distances larger than $R_0$. At last, the connectivity property implies that the network, on the $R_0$-scale, is sufficiently connected. The connectivity assumption can further be used to derive certain inequalities valid on the network, such as analogous versions of Friedrich and Poincaré inequalities (see \cite[Lemma 3.2]{EdeGHKM22} for these results). They are in turn used to prove stability of interpolation onto the network, stated later on in Lemma \ref{lem:interpolation_bound}, which is crucial for the analysis made in this paper. For more discussion on the assumptions, and in particular the connectivity, see \cite[Section 3]{EdeGHKM22}.

\section{Model problem and finite elements}\label{s:model_problem}

We consider the problem (on strong form) to find $u(t)\in V$ such that
\begin{align}
\begin{split}
M\ddu  + Ku &= Mf, \\
u(\cdot, 0) &= g, \\
D_t u(\cdot, 0) &= h,
\label{eq:refmethod}
\end{split}
\end{align}
with corresponding weak form 
\begin{align}
\label{eq:reference_equation_weak_form}
(M\ddu, v) + (Ku,v) &= (Mf, v), \quad \forall v\in V.	
\end{align}
We remark that existence and uniqueness of a global solution to the model problem is guaranteed by the Picard--Lindelöf theorem. This result follows by writing \eqref{eq:refmethod} as a system of first order differential equations, i.e.,
\begin{align}
	D_ty(t) = \tilde{f}(t,y(t)) = 
	\begin{bmatrix}
		0 & I \\ M^{-1}K & 0
	\end{bmatrix} y(t) + 
	\begin{bmatrix}
		0 \\ f(t)
	\end{bmatrix}
\end{align}
with $y(t) = \begin{bmatrix}
u(t) & D_tu(t)
\end{bmatrix}^\mathrm{T}$, where the right-hand side $\tilde{f}(t,y(t))$ is globally Lipschitz continuous with respect to $y(t)$.

Before discretizing \eqref{eq:reference_equation_weak_form} in time and space, we begin by deriving necessary results on regularity for the solution.

\subsection{Regularity}
We define iteratively 
\begin{align}
w_0 := g, \quad w_1 := h, \quad w_j := D_t^{j-2}f(0) - M^{-1}Kw_{j-2}, \ j=2,3,\ldots,
\label{eq:initial_data}
\end{align}
and consider the following equation on strong form
\begin{align}
MD^{m+2}_t u(t) + KD^m_t u(t) &= MD^m_t f(t), \\
D^m_tu(0) &= w_m, \\
D^{m+1}_tu(0) &= w_{m+1},
\end{align}
with corresponding weak form
\begin{align}
(MD^{m+2}_tu(t), v)  + (KD^m_tu(t), v) = (MD^m_tf(t), v), \quad \forall v\in V.
\label{eq:ref_method_with_higher_derivatives}
\end{align}
We deduce following lemma on regularity of the solution.
\begin{lemma}\label{lem:regularity_result}
	Let $u(t)\in V$ be the solution to \eqref{eq:ref_method_with_higher_derivatives} for $m=0,1,2,\ldots$. Then $u$ satisfies the regularity estimate
	\begin{align}
	\mnorm{D^{m+1}_t{u}(t)} + \knorm{D^{m}_tu(t)} \leq 2\sqrt{2}\Big(\mnorm{w_{m+1}} + \knorm{w_m} + 2\lonemnorm{D^{m}_tf} \Big).
	\end{align}
\end{lemma}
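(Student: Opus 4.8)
The plan is to run a standard energy argument, testing the weak form \eqref{eq:ref_method_with_higher_derivatives} with the velocity $D^{m+1}_tu(t)$. This is an admissible test function in $V$ for each fixed $t$, and the temporal regularity needed to differentiate the quantities below follows from the Picard--Lindel\"of representation noted above together with $D^m_tf\in L^1(0,T;M)$. With this choice, and using that both $M$ and $K$ are symmetric and independent of $t$, the two terms on the left-hand side are recognized as exact time derivatives,
\begin{align}
(MD^{m+2}_tu, D^{m+1}_tu) = \tfrac12\ddt \mnorm{D^{m+1}_tu}^2, \qquad (KD^m_tu, D^{m+1}_tu) = \tfrac12\ddt \knorm{D^m_tu}^2.
\end{align}
Introducing the energy $E(t) := \mnorm{D^{m+1}_tu(t)}^2 + \knorm{D^m_tu(t)}^2$ and bounding the right-hand side by the Cauchy--Schwarz inequality in the $M$-inner product, $(MD^m_tf, D^{m+1}_tu) \le \mnorm{D^m_tf}\,\mnorm{D^{m+1}_tu} \le \mnorm{D^m_tf}\sqrt{E(t)}$, one arrives at the differential inequality $\tfrac12\ddt E \le \mnorm{D^m_tf}\sqrt{E}$.

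The second step is to integrate this inequality. Integrating from $0$ to an arbitrary time $t$ and estimating the right-hand side through the maximum of the velocity norm over $[0,t]$ gives $\tfrac12 E(t) \le \tfrac12 E(0) + \big(\max_{[0,t]}\sqrt{E}\big)\,\lonemnorm{D^m_tf}$. Evaluating at a time where $E$ attains its maximum on $[0,t]$ turns this into a scalar quadratic inequality for $a:=\max_{[0,t]}\sqrt{E}$, namely $a^2 - 2a\,\lonemnorm{D^m_tf} - E(0)\le 0$, whose solution yields $\sqrt{E(t)} \le \sqrt{E(0)} + 2\lonemnorm{D^m_tf}$. Finally, the claim is obtained by reading off the initial energy, $E(0) = \mnorm{w_{m+1}}^2 + \knorm{w_m}^2$ from the data \eqref{eq:initial_data}, and applying the elementary inequalities $\tfrac{1}{\sqrt2}(x+y)\le\sqrt{x^2+y^2}\le x+y$ to pass between $\sqrt{E(t)}$ and $\mnorm{D^{m+1}_tu}+\knorm{D^m_tu}$ on the left and between $\sqrt{E(0)}$ and $\mnorm{w_{m+1}}+\knorm{w_m}$ on the right; these bounds are not sharp and comfortably absorb into the stated constant $2\sqrt2$.

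The computations are essentially those of a textbook hyperbolic energy estimate, so the only genuine subtlety is the integration of $\tfrac12\ddt E\le \mnorm{D^m_tf}\sqrt E$: writing it as $\ddt\sqrt E\le\mnorm{D^m_tf}$ is illegitimate at times where $E$ vanishes. I expect this to be the main point to handle carefully, and I would resolve it either by the maximizing-time argument above (which sidesteps dividing by $\sqrt E$) or, equivalently, by regularizing with $\sqrt{E+\eps}$, using $\ddt\sqrt{E+\eps}=\tfrac{\ddt E}{2\sqrt{E+\eps}}\le\mnorm{D^m_tf}$, integrating, and letting $\eps\to0$. A secondary point worth a line is confirming that $D^{m+1}_tu(t)\in V$ and is sufficiently smooth in time to legitimize testing and differentiation, which again follows from the well-posedness already recorded for \eqref{eq:refmethod}.
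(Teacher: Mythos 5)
Your proposal is correct and follows essentially the same route as the paper's proof: test \eqref{eq:ref_method_with_higher_derivatives} with $D^{m+1}_tu$, integrate the resulting energy identity, bound the source contribution by $\big(\max_{[0,t]}\sqrt{E}\big)\lonemnorm{D^m_tf}$, and absorb the maximum rather than dividing by $\sqrt{E}$. The only difference is the absorption mechanism --- the paper uses Young's inequality and kicks back $\tfrac12\linfmnorm{D^{m+1}_tu}^2$ to the left-hand side, while you solve a quadratic inequality in $\max_{[0,t]}\sqrt{E}$ --- which is a cosmetic variation that even yields a slightly sharper constant ($\sqrt2$ in place of $2\sqrt2$).
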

\begin{proof}
	Choose test function $v = D^{m+1}_t u$ in \eqref{eq:ref_method_with_higher_derivatives}, and we have
	\begin{align}
	\frac{1}{2}\ddt \big( \mnorm{D^{m+1}_t{u}}^2 + \knorm{D^m_tu}^2 \big) \leq \mnorm{D^{m}_tf} \mnorm{D^{m+1}_t{u}}.
	\end{align}
	Integrating from $0$ to $t$, and applying Young's inequality, further gives
	\begin{align}
	\mnorm{D^{m+1}_t{u}(t)}^2 + \knorm{D^{m}_tu(t)}^2 &\leq \mnorm{D^{m+1}_t{u}(0)}^2 + \knorm{D^{m}_tu(0)}^2 + 2\intt \mnorm{D^{m}_tf} \mnorm{D^{m+1}_t{u}}\, \ds \\
	&\leq \mnorm{w_{m+1}}^2 + \knorm{w_m}^2 + \linfmnorm{D^{m+1}_t{u}} \cdot 2 \lonemnorm{D^{m}_tf} \\
	&\leq  \mnorm{w_{m+1}}^2 + \knorm{w_m}^2 + \frac{1}{2}\linfmnorm{D^{m+1}_t{u}}^2 + 2 \lonemnorm{D^{m}_tf}^2.
	\end{align}
	From here, we can take the maximum on both sides of the inequality, and move the $\linfmnorm{D_t^{m+1}u}$-term to the left-hand side, to get
	\begin{align}
		\frac{1}{2}\linfmnorm{D^{m+1}_t{u}}^2 \leq  \mnorm{w_{m+1}}^2 + \knorm{w_m}^2 +  2 \lonemnorm{D^{m}_tf}^2.
	\end{align}
	By inserting this, we find the total estimate 
	\begin{align}
	\mnorm{D^{m+1}_t{u}(t)} + \knorm{D^{m}_tu(t)} &\leq 2\sqrt{2}\Big(\mnorm{D^{m+1}_t{u}(0)} + \knorm{D^{m}_tu(0)} + 2\lonemnorm{D^{m}_tf} \Big) \\
	&= 2\sqrt{2}\Big(\mnorm{w_{m+1}} + \knorm{w_m} + 2\lonemnorm{D^{m}_tf} \Big),
	\end{align}
	where the $\sqrt{2}$ comes from removing the squares.
\end{proof}

In the error analysis presented in Section \ref{s:error_estimates}, we derive an optimal order convergence for the novel method presented in this paper. However, the estimate includes several higher orders of temporal derivatives of the solution, why the result in Lemma \ref{lem:regularity_result} is of importance. The bounds are consequently stated in terms of initial data $w_j$ and derivatives of the source function $D^j_t f$, and we require that these functions behave nicely in corresponding norms. For this purpose, we introduce the following definition on well-prepared and compatible data, which works as a network analogy of \cite[Definition 4.5]{AbdH17}.

\begin{definition}(Compatibility and well-preparedness of data)\label{def:well_prepared}
	Let $m\in \mathbb{N}$, and let $\{w_j\}_{j=0}^{m+1}$ be as defined in \eqref{eq:initial_data}. We say that the data is well-prepared and compatible of order $m$, if
	\begin{align}
		\sum_{j=0}^m \lonemnorm{D^j_t f} + \sum_{j=0}^m\knorm{w_j} + \mnorm{w_{m+1}} \leq C_{w,f},
	\end{align}
	where $C_{w,f}$ is independent of the complex features that arise from the network.
\end{definition}

\begin{remark}
	We note that if $g=h=D^jf(0)=0$ for $j=0,\ldots,m-2$, the well-preparedness and compatibility of order $m$ is trivially fulfilled. Also, by simply having initial data that makes the problem \eqref{eq:refmethod} well-posed, we immediately have well-preparedness of order 0. In other cases, we emphasize that $C_{w,f}$ is in fact computable, and it can therefore always be examined a priori that the data satisfies the required well-preparedness for the theoretical results to hold.
\end{remark}

\subsection{Temporal discretization}

We begin the discretization by introducing the temporal scheme to be used for our method (see \cite[Section 13]{Larsson03}), along with necessary energy conservation and estimation results. Let $0=:t_0 < t_1 < \ldots < t_N := T$ be a partition with uniform time step $\tau = \tn - \tnprev$, and denote $\un = u(\tn)$. The temporally discrete version of \eqref{eq:refmethod} is defined as: find $\un \in V$ such that
\begin{align}
(M\fdiscd\bdiscd \un, v) + (K\tfrac{1}{2}(\unhp + \unhm), v) = (Mf^n, v), \quad \forall v\in V,
\label{eq:discretemethodref}
\end{align}
for $n \geq 2$, with initial values $\uo, u^1 \in V$.  Here $\fdiscd \un = (\unnext - \un)/\tau$ and $\bdiscd \un = (\un - \unprev)/\tau$ denote the forward and backward discrete derivative of $\un$, respectively, and $\unhp:= (\unnext+\un)/2$. The average taken in the $(K\cdot,\cdot)$-term is done for stability and accuracy purposes. We continue by investigating how this particular average relates to the term $u(\tn)$. By Taylor expansion, 
\begin{align}
u(\tnhp) &= u(\tn) + D_t{u}(\tn)\frac{\tau}{2} + \int_{\tn}^{\tnhp} \ddu(s)(\tnhp-s)\, \ds, \\
u(\tnhm) &= u(\tn) - D_t{u}(\tn)\frac{\tau}{2} + \int_{\tnhm}^{\tn} \ddu(s)(s-\tnhm)\, \ds.
\end{align}
By adding these two lines, we see that the average is related to $u(\tn)$ as
\begin{align}
\frac{1}{2}\big( u(\tnhp) + u(\tnhm) \big) = u(\tn) + \frac{1}{2}\Qnddu.
\end{align}
Here, for convenience later on, we have defined
\begin{align}
\Theta^n_g := \int_{\tnhm}^{\tnhp} g(s)\Lambda_{[\tnhm,\tnhp]}(s)\, \ds,
\label{eq:Theta}
\end{align}
where $\Lambda_{[a,b]}$ is a hat function in time over the interval $[a,b]$, i.e.,
\begin{align}
\Lambda_{[a,b]}(t) := \begin{cases}
t-a, \quad t\in [a,\tfrac{a+b}{2}], \\
b-t, \quad t\in [\tfrac{a+b}{2}, b],
\end{cases}
\label{eq:lambda_function}
\end{align}
such that $|\Lambda_{[a,b]}| \leq (b-a)/2$.

Following the calculations in \cite[Lemma 13.2]{Larsson03}, the following energy conservation law is deduced.

\begin{lemma}
	The solution of \eqref{eq:discretemethodref}, with source function $f=0$, satisfies
	\begin{align}
	\mnorm{\fdiscd \un}^2 + \knorm{\unhp}^2 = \mnorm{\fdiscd \uo}^2 + \knorm{\uhp}^2, \quad \text{for $n\geq 0$}.
	\end{align}
\end{lemma}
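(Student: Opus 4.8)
The plan is to reduce the claim to a one-step identity for the discrete energy
\begin{align}
E^n := \mnorm{\fdiscd \un}^2 + \knorm{\unhp}^2,
\end{align}
namely $E^n = E^{n-1}$, and then iterate this down to the base level $n=0$. Everything hinges on picking a test function in \eqref{eq:discretemethodref} (with $f=0$) that telescopes the mass term and the stiffness term simultaneously. The natural choice is $v = \unnext - \unprev$, which lies in $V$ since $V$ is linear and $\unnext,\unprev\in V$. The crucial point is that this $v$ admits two convenient representations, $\unnext - \unprev = \tau(\fdiscd \un + \bdiscd \un)$ and $\unnext - \unprev = 2(\unhp - \unhm)$, the first tailored to the mass term and the second to the stiffness term.

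For the mass term I would use the first representation together with $\fdiscd\bdiscd\un = (\fdiscd\un - \bdiscd\un)/\tau$ and the symmetry of $M$ to obtain
\begin{align}
(M\fdiscd\bdiscd\un, \unnext - \unprev) = (M(\fdiscd\un - \bdiscd\un), \fdiscd\un + \bdiscd\un) = \mnorm{\fdiscd\un}^2 - \mnorm{\bdiscd\un}^2,
\end{align}
where the factors of $\tau$ cancel and the cross terms vanish precisely because $M$ is symmetric; since $\bdiscd\un = \fdiscd\unprev$ the right-hand side is $\mnorm{\fdiscd\un}^2 - \mnorm{\fdiscd\unprev}^2$. For the stiffness term I would use the second representation and the symmetry of $K$,
\begin{align}
(K\tfrac{1}{2}(\unhp + \unhm), \unnext - \unprev) = (K(\unhp + \unhm), \unhp - \unhm) = \knorm{\unhp}^2 - \knorm{\unhm}^2,
\end{align}
where again the cross terms cancel by symmetry.

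Adding the two contributions and using that their sum vanishes (the scheme with $f=0$ tested against $v$) yields $\mnorm{\fdiscd\un}^2 - \mnorm{\fdiscd\unprev}^2 + \knorm{\unhp}^2 - \knorm{\unhm}^2 = 0$, i.e. $E^n = E^{n-1}$, since $\mnorm{\fdiscd\unprev}^2 + \knorm{\unhm}^2$ is exactly $E^{n-1}$. A straightforward induction telescoping down to $n=0$ then gives $E^n = E^0 = \mnorm{\fdiscd\uo}^2 + \knorm{\uhp}^2$, which is the assertion.

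I do not expect a genuine obstacle here: the argument is essentially algebraic bookkeeping, and the only points requiring care are (i) recognizing that the symmetry of \emph{both} $M$ and $K$ is what makes the cross terms cancel, and (ii) keeping the half-integer indices consistent so that the boundary terms telescope cleanly against the neighbouring energy levels. The one structural subtlety worth flagging is aligning the index range on which \eqref{eq:discretemethodref} holds with the range $n\geq 0$ in the statement, so that the induction actually reaches the base case $E^0$.
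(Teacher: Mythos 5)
Your proposal is correct and follows essentially the same route as the paper: the paper deduces this lemma from the computation in \cite[Lemma 13.2]{Larsson03}, which is precisely your argument of testing the $f=0$ scheme with a multiple of $\unnext-\unprev$, using the symmetry of $M$ and $K$ to obtain $\mnorm{\fdiscd\un}^2-\mnorm{\fdiscd\unprev}^2+\knorm{\unhp}^2-\knorm{\unhm}^2=0$, and telescoping. Indeed, the identical identities (with the test function scaled by $1/(2\tau)$) appear in the paper's own proof of Lemma~\ref{lem:discreteschemeestimate}.
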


We furthermore derive stability estimates for the solution $\un$ of \eqref{eq:discretemethodref} in terms of given data. The result in the following lemma will be useful later on for the error estimates in Section \ref{s:error_estimates}.

\begin{lemma}\label{lem:discreteschemeestimate}
	The solution $\un \in V$ to \eqref{eq:discretemethodref} satisfies the estimate
	\begin{align}
	\mnorm{\fdiscd \un} + \knorm{\unhp} \leq C\Big(\mnorm{\fdiscd \uo} + \knorm{\uhp} + \sumin \tau \mnorm{f^i}\Big).
	\end{align}
\end{lemma}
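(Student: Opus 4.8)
The plan is to mirror the energy-conservation argument underlying the preceding lemma, but now tracking how the discrete energy grows in the presence of a nonzero source. I would introduce the discrete energy $E^n := \mnorm{\fdiscd \un}^2 + \knorm{\unhp}^2$, which is exactly the quantity shown to be conserved when $f=0$. The strategy is to test the scheme \eqref{eq:discretemethodref} with the symmetric test function $v = \unnext - \unprev = \tau(\fdiscd \un + \bdiscd \un)$, derive a one-step identity for $E^n - E^{n-1}$, bound the resulting source contribution, and then telescope.

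For the acceleration term, I would write $\unnext - \unprev = (\unnext - \un) + (\un - \unprev)$ and $\unnext - 2\un + \unprev = (\unnext - \un) - (\un - \unprev)$; by the symmetry of $M$ the cross terms cancel, giving $(M\fdiscd\bdiscd\un, \unnext - \unprev) = \mnorm{\fdiscd\un}^2 - \mnorm{\bdiscd\un}^2$. Likewise, since $\unnext - \unprev = 2(\unhp - \unhm)$ and the stiffness term carries the average $\tfrac{1}{2}(\unhp + \unhm)$, the symmetry of $K$ yields $(K\tfrac12(\unhp + \unhm), \unnext - \unprev) = \knorm{\unhp}^2 - \knorm{\unhm}^2$. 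Using that $\bdiscd\un = \fdiscd\unprev$ and that $\unhm$ is the midpoint value at the previous step, so that $\mnorm{\bdiscd\un}^2 + \knorm{\unhm}^2 = E^{n-1}$, the left-hand side collapses to the telescoping increment and I obtain the identity $E^n - E^{n-1} = (Mf^n, \unnext - \unprev)$.

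Next I would estimate the right-hand side. By Cauchy--Schwarz in the $M$-inner product together with the triangle inequality, $|(Mf^n, \unnext - \unprev)| \leq \tau\mnorm{f^n}(\mnorm{\fdiscd\un} + \mnorm{\bdiscd\un}) \leq \tau\mnorm{f^n}(\sqrt{E^n} + \sqrt{E^{n-1}})$. Factoring the left-hand side as $E^n - E^{n-1} = (\sqrt{E^n} - \sqrt{E^{n-1}})(\sqrt{E^n} + \sqrt{E^{n-1}})$ and cancelling the common factor gives the clean one-step bound $\sqrt{E^n} - \sqrt{E^{n-1}} \leq \tau\mnorm{f^n}$. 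Summing over the time levels then produces $\sqrt{E^n} \leq \sqrt{E^0} + \sumin \tau\mnorm{f^i}$.

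Finally I would translate back to the stated norms using the elementary equivalences $\mnorm{\fdiscd\un} + \knorm{\unhp} \leq \sqrt{2}\sqrt{E^n}$ and $\sqrt{E^0} \leq \mnorm{\fdiscd\uo} + \knorm{\uhp}$, which gives the estimate with constant $C = \sqrt{2}$. The only genuinely delicate point is the passage from the difference of squares to the difference of square roots, i.e.\ dividing through by $\sqrt{E^n} + \sqrt{E^{n-1}}$; this step must be justified separately in the degenerate case where the common factor vanishes (where the bound is trivial). Everything else is routine bookkeeping with the symmetry of $M$ and $K$, Cauchy--Schwarz, and the telescoping sum.
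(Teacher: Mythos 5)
Your proposal is correct, and it diverges from the paper's proof in a way worth noting. Both arguments start identically: you test with $\unnext-\unprev$ (the paper uses $\tfrac{1}{2\tau}(\unnext-\unprev)$, which differs only by a scalar), and both arrive at the same telescoping energy identity for $E^n = \mnorm{\fdiscd\un}^2 + \knorm{\unhp}^2$ with source contribution $(Mf^n,\unnext-\unprev)$. The difference lies in how the source term is closed. The paper sums the inequality first, then bounds $\sumin\tau\mnorm{f^i}\mnorm{\fdiscd\ui+\fdiscd\uiprev}$ by pulling out $\max_i\mnorm{\fdiscd\ui+\fdiscd\uiprev}$, applying Young's weighted inequality, and absorbing the resulting $\tfrac12(\max_i\mnorm{\fdiscd\ui})^2$ into the left-hand side via a separate kick-back argument (which requires observing that the summed inequality holds uniformly over all earlier time levels). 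You instead factor $E^n - E^{n-1} = (\sqrt{E^n}-\sqrt{E^{n-1}})(\sqrt{E^n}+\sqrt{E^{n-1}})$, bound the source by $\tau\mnorm{f^n}(\sqrt{E^n}+\sqrt{E^{n-1}})$ using $\mnorm{\fdiscd\un}\leq\sqrt{E^n}$ and $\mnorm{\bdiscd\un}=\mnorm{\fdiscd\unprev}\leq\sqrt{E^{n-1}}$, and cancel the common factor to get the per-step bound $\sqrt{E^n}-\sqrt{E^{n-1}}\leq\tau\mnorm{f^n}$, which telescopes immediately (with the degenerate case $E^n=E^{n-1}=0$ correctly handled as trivial). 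Your route is shorter, avoids both Young's inequality and the absorption step, and yields the explicit and sharper constant $C=\sqrt{2}$, whereas the paper's chain of estimates produces a larger constant; the paper's max-and-absorb technique is the more generic tool, useful when the right-hand side cannot be factored against the energy increment, but in this instance your square-root factorization is cleaner and fully rigorous.
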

\begin{proof}
	We consider \eqref{eq:discretemethodref} with test function
	\begin{align}
	v = \frac{1}{2\tau}\big(\unnext - \unprev\big) = \frac{1}{2} \big(\fdiscd \un + \fdiscd \unprev \big) = \frac{1}{\tau}\big(\unhp - \unhm\big).
	\end{align}
	The first term on the left-hand side can then be written as
	\begin{align}
	(M\fdiscd \bdiscd \un, v) &= \frac{1}{2\tau}(M(\fdiscd \un - \fdiscd \unprev), \fdiscd \un + \fdiscd \unprev) = \frac{1}{2}\bdiscd \mnorm{\fdiscd \un}^2,
	\end{align}
	and the second as
	\begin{align}
	(K(\tfrac{1}{2}(\unhp + \unhm)), v) &= \frac{1}{2\tau} (K(\unhp + \unhm), \unhp - \unhm)  \\ &= \frac{1}{2}\bdiscd \knorm{\unhp}^2.
	\end{align}
	Inserting this, we find that
	\begin{align}
	\frac{1}{2}\bdiscd \big( \mnorm{\fdiscd \un}^2 + \knorm{\unhp}^2 \big) = \frac{1}{2}(Mf^n, \fdiscd \un + \fdiscd \unprev ) \leq \frac{1}{2}\mnorm{f^n} \mnorm{\fdiscd \un + \fdiscd \unprev}.
	\end{align}
	Now multiply both sides by $\tau$ and sum over $i=1,\ldots,n$, and we arrive at
	\begin{align}
	\mnorm{\fdiscd \un}^2 + \knorm{\unhp}^2 &\leq  \mnorm{\fdiscd \uo}^2 + \knorm{\uhp}^2 + \sumin \tau\mnorm{f^i} \mnorm{\fdiscd \ui + \fdiscd \uiprev}.
	\label{eq:equality}
	\end{align}
	We furthermore note that
	\begin{align}
	\sumin \tau \mnorm{f^i} \mnorm{\fdiscd \ui + \fdiscd \uiprev} &\leq \max_{i=1,\ldots,n} \mnorm{\fdiscd \ui + \fdiscd \uiprev} \sumin \tau \mnorm{f^i} \\
	&\leq 2\Big(\sumin \tau \mnorm{f^i}\Big)^2 + \frac{1}{8}\Big(\max_{i=1,\ldots,n} \mnorm{\fdiscd \ui + \fdiscd \uiprev} \Big)^2 \\
	&\leq 2\Big(\sumin \tau \mnorm{f^i}\Big)^2 + \frac{1}{2}\Big(\max_{i=0,\ldots,n} \mnorm{\fdiscd \ui} \Big)^2,
	\end{align}
	where in the second step we applied Young's weighted inequality and in the last step the triangle inequality. We can further estimate the last term in above inequality by passing it to the left-hand side in \eqref{eq:equality}, and get
	\begin{align}
	\frac{1}{2}\Big(\max_{i=0,\ldots,n} \mnorm{\fdiscd \ui}\Big)^2 \leq \mnorm{\fdiscd \uo}^2 + \knorm{\uhp}^2 + 2\Big(\sumin \tau \mnorm{f^i}\Big)^2.
	\end{align}
	In total, this yields the estimate
	\begin{align}
	\mnorm{\fdiscd \un}^2 + \knorm{\unhp}^2 \leq 2\mnorm{\fdiscd \uo}^2 + 2\knorm{\uhp}^2 + 4\Big(\sumin \tau \mnorm{f^i}\Big)^2.
	\end{align}
	Losing the squares on each term now yields the desired result.
\end{proof}

\subsection{Spatial discretization}

 We introduce a coarse finite element mesh of square shaped elements, similar to the one used for the network models in, e.g., \cite{EdeGHKM22, GorHM22}. That is, let $B_R(x)$ be the boxes defined in \eqref{eq:BR_definition}. Moreover, let $\TH$ for $H=2^{-i}$, $i\in \mathbb{N}$, be a family of partitions of $\Omega$ into uniform hypercubes (i.e., squares for $d=2$, cubes for $d=3$, etc.) of length $H$. That is,
\begin{align}
	\TH := \{ B_{H/2}(x) : x = (x_1, \ldots, x_d) \in \Omega \text{ and } H^{-1}x_i+1/2 \text{ are integers for $i=1,\ldots,d$} \}.
\end{align}
The full procedure of introducing a coarse finite element mesh of boxes is visualized in Figure~\ref{fig:mean and std of nets}.

\pgfmathsetseed{15}
\begin{figure*}
	\centering
	\begin{subfigure}[b]{0.475\textwidth}
		\centering
		\begin{tikzpicture}[scale=5.5]
\clip (-0.02, -0.02) rectangle (1.02,1.02);
    \foreach \i [count=\ni] in {1,...,3}
    {
    \node[coordinate] (l\ni) at (-0.2+0.1*rand, 0.5+0.4*rand) {};
    \node[coordinate] (r\ni) at (1.2+0.1*rand, 0.5+0.4*rand) {};
    \node[coordinate] (u\ni) at (0.5+0.4*rand, 1.2+0.1*rand) {};
    \node[coordinate] (d\ni) at (0.5+0.4*rand, -0.2+0.1*rand) {};
    
    \draw[line width=0.00mm, name path global/.expanded=lr\ni] (l\ni) -- (r\ni);
    \draw[line width=0.00mm, name path global/.expanded=lu\ni] (l\ni) -- (u\ni);
    \draw[line width=0.00mm, name path global/.expanded=ld\ni] (l\ni) -- (d\ni);
    \draw[line width=0.00mm, name path global/.expanded=ud\ni] (u\ni) -- (d\ni);
    \draw[line width=0.00mm, name path global/.expanded=ur\ni] (u\ni) -- (r\ni);
    \draw[line width=0.00mm, name path global/.expanded=dr\ni] (d\ni) -- (r\ni);
    }    
    
    \node[coordinate] (x1) at (-0.05, -0.15) {};
    \node[coordinate] (y1) at (0.95, 1.05) {};
    \draw[line width=0.00mm, name path global/.expanded=xy1] (x1) -- (y1);
    
    \node[coordinate] (x2) at (0.46, 1.05) {};
    \node[coordinate] (y2) at (0.44, -0.05) {};
    \draw[line width=0.00mm, name path global/.expanded=xy2] (x2) -- (y2);
    
    \node[coordinate] (x3) at (0.6, -0.05) {};
    \node[coordinate] (y3) at (1.05, 0.95) {};
    \draw[line width=0.00mm, name path global/.expanded=xy3] (x3) -- (y3);
    
    \node[coordinate] (x4) at (0.15, 1.05) {};
    \node[coordinate] (y4) at (0.08, -0.05) {};
    \draw[line width=0.00mm, name path global/.expanded=xy4] (x4) -- (y4);
    
    \node[coordinate] (x5) at (-0.05, 0.38) {};
    \node[coordinate] (y5) at (1.05, 0.62) {};
    \draw[line width=0.00mm, name path global/.expanded=xy5] (x5) -- (y5);
    
    \node[coordinate] (x6) at (-0.05, 0.15) {};
    \node[coordinate] (y6) at (1.05, 0.20) {};
    \draw[line width=0.00mm, name path global/.expanded=xy6] (x6) -- (y6);
    
    \node[coordinate] (x7) at (-0.05, 0.90) {};
    \node[coordinate] (y7) at (1.05, 0.80) {};
    \draw[line width=0.00mm, name path global/.expanded=xy7] (x7) -- (y7);
    
    \node[coordinate] (x8) at (-0.05, 0.80) {};
    \node[coordinate] (y8) at (1.05, 0.60) {};
    \draw[line width=0.00mm, name path global/.expanded=xy8] (x8) -- (y8);
    
    \node[coordinate] (x9) at (0.18, -0.05) {};
    \node[coordinate] (y9) at (0.42, 1.05) {};
    \draw[line width=0.00mm, name path global/.expanded=xy9] (x9) -- (y9);
    
    \node[coordinate] (x10) at (-0.05, 0.31) {};
    \node[coordinate] (y10) at (1.05, 0.24) {};
    \draw[line width=0.00mm, name path global/.expanded=xy10] (x10) -- (y10);
    
    \node[coordinate] (x11) at (-0.05, 0.72) {};
    \node[coordinate] (y11) at (1.05, 0.44) {};
    \draw[line width=0.00mm, name path global/.expanded=xy11] (x11) -- (y11);
    
    \node[coordinate] (x12) at (-0.05, 1.02) {};
    \node[coordinate] (y12) at (1.05, 0.01) {};
    \draw[line width=0.00mm, name path global/.expanded=xy12] (x12) -- (y12);
    
    \node[coordinate] (x13) at (0.72, 1.05) {};
    \node[coordinate] (y13) at (0.78, -0.05) {};
    \draw[line width=0.00mm, name path global/.expanded=xy13] (x13) -- (y13);

\end{tikzpicture}
		\caption[Network2]%
		{{\small Fiber network.}}    
		\label{fig:mean and std of net14}
	\end{subfigure}
	\hfill
	\begin{subfigure}[b]{0.475\textwidth}  
		\centering 
		\input{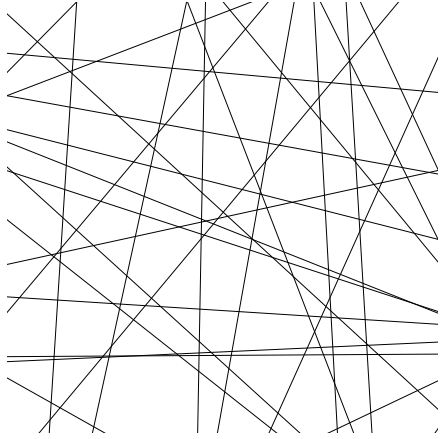}
		\caption[]%
		{{\small Domain $\Omega$ and nodes $\calN$ (red) introduced.}}    
		\label{fig:mean and std of net24}
	\end{subfigure}
	\vskip\baselineskip
	\begin{subfigure}[b]{\textwidth}   
		\centering 
		\begin{adjustwidth*}{0pt}{}
		\input{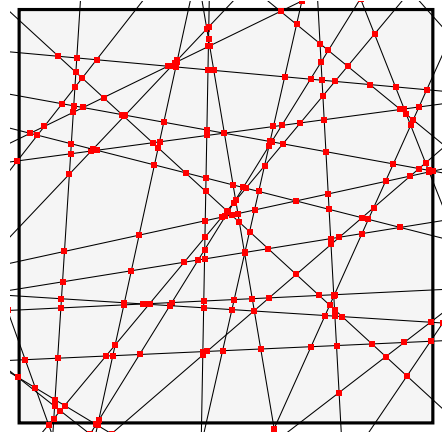}
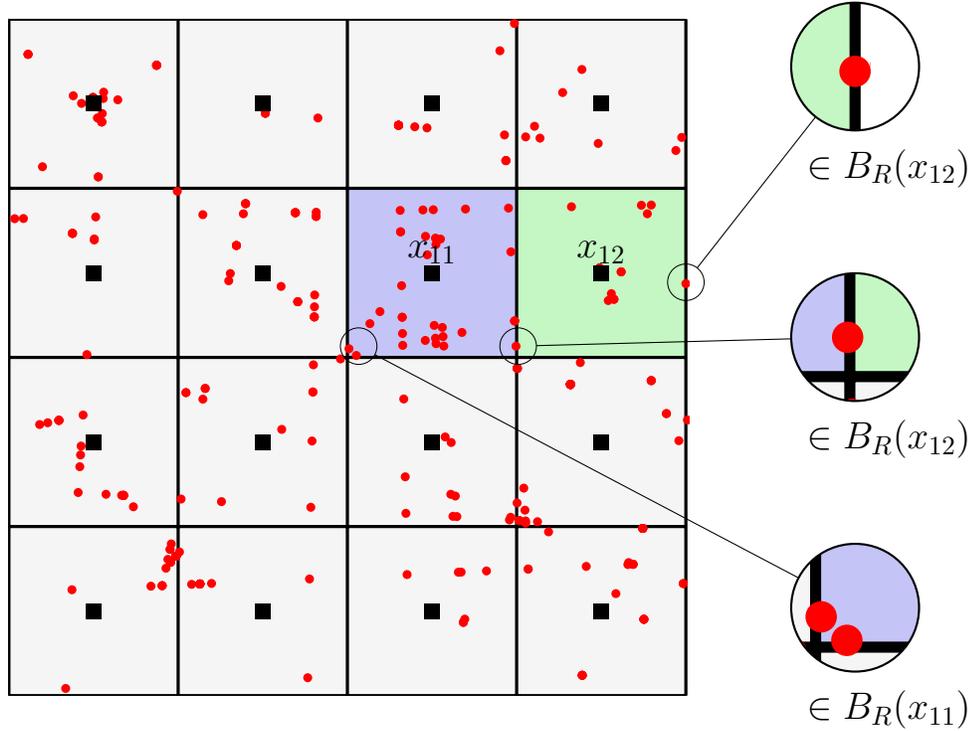
		\end{adjustwidth*}
		\caption[]%
		{{\small Grid coordinates $\{x_j\}$ inserted to define boxes, e.g., $B_R(x_{11})$ in blue, and $B_R(x_{12})$ in green.}}    
		\label{fig:mean and std of net34}
	\end{subfigure}
	\caption{Workflow of the spatial discretization. An example of fibers creating a network is seen in (A). In (B), a domain $\Omega$ has been defined in which the network is embedded, and the intersections of threads define the node set $\calN$, visualized in red. The boxes $B_R(x_j)$ are further defined by introducing grid coordinates ${x_j}$, depicted as black squares in (C). At last, (C) shows three examples of nodes placed on grid boundaries, and which corresponding box they belong to by the definition \eqref{eq:BR_definition}.}
	\label{fig:mean and std of nets}
\end{figure*}

Next, we introduce a finite element space of first order on the mesh $\TH$. Let $\hat{Q}_H$ be the space of continuous functions on $\Omega$ who, restricted to $T \in \TH$, are linear combinations of the polynomials $z=(z_1,\ldots,z_d) \mapsto z^\alpha$ for the multi-index $\alpha$ with $\alpha_i \in \{0,1\}$ for $i=1,\ldots,d$. That is, for, e.g., $d=2$, the space $\hat{Q}_H$ consists of bilinear functions on $\TH$. Furthermore, let
\begin{align}
	Q_H = \{q\in \hat{Q}_H : q\big|_{\Gamma}= 0\}
\end{align}
be the corresponding finite element space with incorporated boundary condition. Here we assume that the Dirichlet boundary condition is imposed on full element edges of the finite element mesh. We moreover wish to restrict these spaces to nodes in $\Omega$. Therefore, let $\hat{V}_H$ denote the space of functions in $\hat{Q}_H$ restricted to $\calN$, and likewise for $V_H$. 

We finish the spatial discretization by introducing a basis for the above defined spaces. Let $\{y_i\}_{i=1}^m$ denote the nodes of the mesh $\TH$. By $\{\phi_i\}_{i=1}^m$, we denote the standard Lagrange finite element basis functions that span $\hat{Q}_H$, and by $\{\varphi_i\}_{i=1}^m$ their restrictions to the network nodes, which in turn span $\hVH$. Moreover, we assume, without loss of generality, that the $m_0 < m$ first basis functions $\{\phi_i\}_{i=1}^{m_0}$ span $Q_H$, i.e., the space of functions with zero trace on $\Gamma$.

At this point, we can define the fully discretized problem corresponding to our model problem \eqref{eq:refmethod} by replacing the function space $V$ in \eqref{eq:discretemethodref} with the finite element space $\VH$. That is, one may seek a solution $\unH \in \VH$ such that
\begin{align}
		(M\fdiscd \bdiscd \unH, v) + (K\tfrac{1}{2}(\unHhp + \unHhm), v) = (\fn, v),\quad \forall v\in \VH.
\end{align}
However, an issue with this method comes with the fact that the matrix $K$ may contain highly varying and complex features inherited by the structure of the underlying network. These features can in turn not be resolved by the finite element space $\VH$, unless a sufficiently small mesh size $H$ is considered. If these network features vary on a scale of $\varepsilon$, we therefore require $H < \varepsilon$, which quickly becomes prohibitively small with decreasing $\varepsilon$. The main goal of this paper is to construct an efficient method by circumventing this issue, mainly by incorporating the features of $K$ into the finite element space, such that the features of the network can be accurately resolved even for coarse grid sizes $H > \varepsilon$. The framework, presented in the subsequent section, is based on the localized orthogonal decomposition method. It was first introduced in \cite{MalP14} and was later developed for network models in \cite{KetMMFWE20}.

\section{Localized orthogonal decomposition}\label{s:localized_orthogonal_decomposition}

This section is dedicated to the development of a localized orthogonal decomposition method for wave propagation on spatial networks. The main idea is to split the solution space into a coarse-scale part, onto which functions can be interpolated, and a fine-scale part that recovers the features not captured by the interpolant. The fine-scale space can in turn be utilized to incorporate the complex network features into the finite element space to construct a modified space, $\Vms$, with the same dimension as $\VH$, but with more accurate approximation properties.

We begin by introducing the interpolant that maps functions from the solution space $V$ onto the finite element space $\VH$. The definition is inspired by the construction of the Scott--Zhang interpolant, originally defined in \cite{ScoZ90}. More precisely, given a basis function $\varphi_j$, let the (unique) element containing $y_j$ be denoted by $T_j$, and define $\psi_j \in \hat{V}_H$ such that $(M_{T_j}\psi_j, \varphi_\ell) = \delta_{j\ell}$. The interpolant $\cI: V\rightarrow \VH$ is then defined as
\begin{align}
	\cI v := \sum_{j=1}^{m_0} (M_{T_j}\psi_j, v) \varphi_j.
	\label{eq:interpolant_definition}
\end{align}
In \cite{EdeGHKM22}, several useful results for $\cI$ are derived. In particular, the following lemma is presented, which is of great importance for the work in this paper.
\begin{lemma}[Interpolation error bound]\label{lem:interpolation_bound}
	If Assumption \ref{ass:network_properties} holds, and $H \geq R_0$, then the interpolant $\cI$ in \eqref{eq:interpolant_definition} satisfies
	\begin{align}
	H^{-1}\mnorm{v - \cI v} + \lnorm{\cI v} \leq C \lnorm{v}, \quad \forall v\in V,
	\end{align} 
	where the constant depends on the connectivity and uniformity of the network, as well as the dimension $d$, but is independent of the rapidly varying data inherited by the network.
\end{lemma}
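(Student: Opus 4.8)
The plan is to follow the classical Scott--Zhang interpolation template of \cite{ScoZ90}, transported to the network setting, where the roles of the Lebesgue and Sobolev seminorms are played by $\mnorm{\cdot}$ and $\lnorm{\cdot}$, and where every scaling argument is powered by the homogeneity and connectivity parts of Assumption \ref{ass:network_properties} rather than by a reference-element transformation. The four ingredients are: (i) the projection property of $\cI$, namely $\cI w = w$ for $w \in \VH$ and in particular $\cI c = c$ for constants $c$ restricted to $\calN$, which follows algebraically from the biorthogonality $(M_{T_j}\psi_j, \varphi_\ell) = \delta_{j\ell}$; (ii) a local $M$-stability estimate $\big| \cI v \big|_{M,T} \lesssim \big| v \big|_{M,\omega_T}$ on the coarse element patch $\omega_T$; (iii) the network Poincaré and Friedrichs inequalities of \cite[Lemma 3.2]{EdeGHKM22}; and (iv) a network inverse inequality $\big| w \big|_{L,T} \lesssim H^{-1}\big| w \big|_{M,\omega_T}$ on the coarse space $\VH$. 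A finite-overlap argument, valid because the locality assumption caps edge lengths by $R_0 \le H$ and the mesh is uniform, then upgrades the local bounds to the global statement.

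For the approximation term I would work element by element. Fix $T = B_{H/2}(x) \in \TH$ and let $\omega_T$ be the union of the coarse elements meeting $T$; by locality every edge touching $T$ stays inside $\omega_T$. Using the projection property I subtract a local representative: for interior patches take $c_T$ to be the $M$-weighted mean of $v$ over $\omega_T$, so that
\begin{align}
\big| v - \cI v \big|_{M,T} = \big| (v - c_T) - \cI(v - c_T) \big|_{M,T} \le \big| v - c_T \big|_{M,T} + \big| \cI(v - c_T) \big|_{M,T}.
\end{align}
The first term is controlled directly and the second by local $M$-stability (ii), so both are bounded by $\big| v - c_T \big|_{M,\omega_T}$; the network Poincaré inequality then gives $\big| v - c_T \big|_{M,\omega_T} \lesssim H \big| v \big|_{L,\omega_T}$. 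For patches abutting $\Gamma$ I instead take $c_T = 0$ and invoke the Friedrichs inequality, which is where the boundary-density assumption enters. Squaring, summing over $T \in \TH$, and using finite overlap yields $\mnorm{v - \cI v} \lesssim H \lnorm{v}$, i.e.\ the first half of the claim after dividing by $H$.

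For the stability term I would again localize and exploit that $\cI$ reproduces constants. On each $T$,
\begin{align}
\big| \cI v \big|_{L,T} = \big| \cI v - c_T \big|_{L,T} \lesssim H^{-1}\big| \cI v - c_T \big|_{M,\omega_T} = H^{-1}\big| \cI(v - c_T) \big|_{M,\omega_T} \lesssim H^{-1}\big| v - c_T \big|_{M,\omega_T},
\end{align}
where the first inequality is the network inverse estimate (iv) applied to the coarse function $\cI v - c_T \in \VH$ and the last is local $M$-stability (allowing a mild enlargement of $\omega_T$ absorbed by finite overlap). Applying Poincaré/Friedrichs once more bounds the right-hand side by $\big| v \big|_{L,\omega_T}$, and summation gives $\lnorm{\cI v} \lesssim \lnorm{v}$. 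Combining the two halves proves Lemma \ref{lem:interpolation_bound}.

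The main obstacle is establishing (ii) and (iv) with constants that are genuinely independent of the rapidly varying network data, as the statement demands. Both rest on controlling the element-local mass functionals: the dual functions $\psi_j$ are defined through $(M_{T_j}\psi_j, \varphi_\ell) = \delta_{j\ell}$, and one must show that the associated local mass matrix is uniformly well-conditioned, with the $\mnorm{\cdot}$-sizes of $\psi_j$ and $\varphi_j$ scaling like $(\rho H^d)^{\mp 1/2}$. This is precisely what the homogeneity bound $\rho \le (2R)^{-d}\big| 1 \big|^2_{M,B_R(x)} \le \sigma\rho$ supplies, but the argument needs care because the nodes inside an element are distributed according to the unknown network geometry; the uniformity constant $\sigma$ must be tracked throughout to certify that it does not secretly encode a dependence on the fine scale $\eps$. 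Once the Poincaré, Friedrichs, and inverse inequalities are in hand with such $\eps$-independent constants, the remaining bookkeeping is routine.
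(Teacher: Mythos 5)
The paper contains no proof of Lemma \ref{lem:interpolation_bound} at all: it is imported verbatim from \cite{EdeGHKM22}, and the proof there is precisely the Scott--Zhang-type argument you outline --- local constant reproduction and $M$-stability of the dual functionals $\psi_j$, the network Poincar\'e/Friedrichs inequalities of \cite[Lemma 3.2]{EdeGHKM22} (with the boundary-density assumption entering through the Friedrichs bound near $\Gamma$), an inverse inequality, and a finite-overlap summation. Your diagnosis of the one data-sensitive ingredient is also the right one: the uniform conditioning of the local mass matrix, with $\psi_j$ and $\varphi_j$ scaling like $(\rho H^d)^{\mp 1/2}$, is exactly \cite[Lemma 3.4]{EdeGHKM22} --- the same eigenvalue bound this paper reuses in the proof of Lemma \ref{lem:inverse_inequality} --- so the proposal is correct and follows essentially the same route as the proof the paper cites.
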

Next, we let the kernel of $\cI$ define the fine-scale space, i.e.,
\begin{align}
	W := \text{ker}(\cI) = \{v\in V : \cI v = 0\},
\end{align}
such that the features not captured by $\cI$ are recovered in $W$. We now seek to use $W$ to compute these fine-scale network features, and in turn embed them into $\VH$. For this purpose, let $\calQ : V \rightarrow W$ be the so-called correction operator, such that $\calQ v\in W$ solves
\begin{align}
	(K\calQ v, w) = (Kv, w), \quad \forall w\in W.
	\label{eq:global_corrector_problem}
\end{align}
Our modified finite element space, $\Vms$, is now defined by subtracting the corrected space $\calQ \VH$, from the original space $\VH$, i.e.,
\begin{align}
	\Vms:= \VH - \calQ \VH = \{(I-\calQ)v : v\in \VH\}.
\end{align}
This construction yields the splitting $V = \Vms \oplus W$, such that elements from $\Vms$ and $W$ are orthogonal with respect to the $K$-inner product. Moreover, note that $\dim(\Vms) = \dim(\VH)$, such that the coarse dimension of $\VH$ is preserved, while information about the network and its fine-scale features are now taken into account in $\Vms$.

We are now ready to define an ideal version of our novel method. It reads: find $\unmsH \in \Vms$ such that
\begin{align}
	(M\fdiscd \bdiscd \unmsH, v) + (K\tfrac{1}{2}(\unmsHhp + \unmsHhm), v) = (\fn, v),\quad \forall v\in \Vms,
	\label{eq:ideal_method}
\end{align}
with $u^{\mathrm{ms}, 0}_H, u^{\mathrm{ms}, 1}_H$ being suitable approximations of $u(t_0)$ and $u(t_1)$ in $\Vms$.

The ideal method above manages to approximate the solution accurately, even for coarse grid sizes. The method however relies on the global corrector problem \eqref{eq:global_corrector_problem}, which in practice becomes prohibitively expensive in terms of computational complexity. Fortunately, each basis correction satisfies an exponential decay away from its node, and can therefore be computed on a local patch surrounding its node. Consequently, the complexity of computing a basis correction is significantly reduced, and its local support moreover makes the corresponding matrix system sparse.

For the localization, we begin by introducing coarse grid patches on the mesh $\TH$. Namely, for any subdomain $\omega \in \Omega$, we define
\begin{align}
U(\omega):= \{x\in \Omega: \exists T\in \TH : x\in T, \overline{T}\cap \overline{\omega} \neq \emptyset\}.
\label{eq:grid_patches}
\end{align}
That is, for, e.g., an element $T\in\TH$, the patch $U(T)$ contains all nodes in the element $T$, as well as the nodes in adjacent elements. Common choices for $\omega$ often include elements $T\in \TH$ or nodes $x\in \calN$, for which we write $U(x) := U(\{x\})$. Moreover, for $k\in \mathbb{N}$, we recursively define $U_k(\omega) := U_{k-1}(U(\omega))$, with $U_1 := U$. 

Given the coarse grid patches \eqref{eq:grid_patches}, we define the restricted fine-scale space
\begin{align}
	W^\omega_k := \{v\in W : \supp(v)\subseteq U_k(\omega)\},
\end{align}
for any subdomain $\omega \subseteq \Omega$. Next, we define a localized, element-restricted, corrector operator $\calQ^T_k : \VH \rightarrow W^T_k$ such that $\calQ^T_k v \in W^T_k$ solves
\begin{align}
	(K\calQ^T_k v, w) = (K_Tv, w), \quad \forall w\in W^T_k.
	\label{eq:element_correction}
\end{align}
In this way, the computation of $\calQ^T_k v$ is cheap for each element $T\in \TH$, due to the restricted domain $U_k(T)$ it is computed on. Moreover, we obtain the corresponding global version of the localized correction by summing over all $T\in \TH$, i.e.,
\begin{align}
	\calQ_k v = \sum_{T\in\TH} \calQ^T_k v,
\end{align}
since the matrix $K_T$ sums up to $K$. We are now ready to define the localized multiscale space in a similar fashion as $\Vms$, but replacing $\calQ$ by its localized version $\calQ_k$. That is, we have the space $\VmsHk := \VH - \calQ_k \VH$, spanned by the corrected basis $\{\varphi_i - \calQ_k \varphi_i\}_{i=1}^{m_0}$. For an illustration of a corrected basis function computed on a patch $U_k(x)$, see Figure~\ref{fig:mod}.

\begin{figure}
	\centering
	\includegraphics{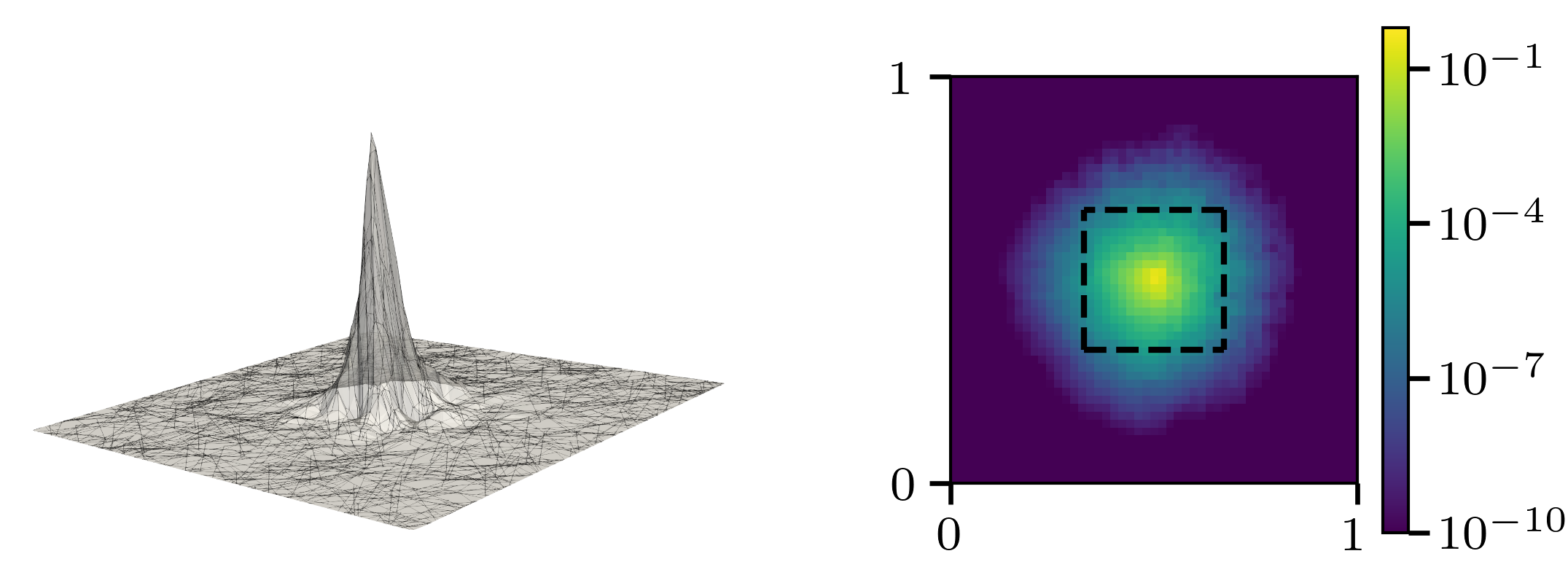}
	\caption{The left figure illustrates a typical corrected basis function, $\varphi_i - \calQ \varphi_i$, where $H=2^{-5}$, on $U_6(x_i)$ with $x_i = (0.5,0.5)$. To the right, we see the magnitude of the function on the entire domain. The dashed square in the right figure is $U_6(x_i)$.}
	\label{fig:mod}
\end{figure}

The localized version of \eqref{eq:ideal_method}, and the main method of this paper is stated as: find $\unmsHk \in \VmsHk$ such that
\begin{align}
	(M\fdiscd \bdiscd \unmsHk, v) + (K\tfrac{1}{2}(\unmsHkhp + \unmsHkhm), v) = (\fn, v),\quad \forall v\in \VmsHk,
	\label{eq:localized_method}
\end{align}
with initial values $u^{\mathrm{ms},0}_{H,k}, u^{\mathrm{ms},1}_{H,k} \in \VmsHk$ being suitable approximations of $u(t_0)$ and $u(t_1)$ from the reference method \eqref{eq:reference_equation_weak_form}.

We end the section by presenting following result, which shows the stability of the time-stepping scheme for our main method. The result follows by considering $\VmsHk$ as test function space in the proof of \cite[Lemma 13.2]{Larsson03}.
\begin{proposition}
	The solution of \eqref{eq:localized_method}, with source function $f=0$, satisfies
	\begin{align}
		\mnorm{\fdiscd \unmsHk}^2 + \knorm{\unmsHkhp}^2 = \mnorm{\fdiscd u^{\ms, 0}_{H,k}}^2 + \knorm{u^{\ms, 1/2}_{H,k}}^2, \quad n \geq 0.
	\end{align}
\end{proposition}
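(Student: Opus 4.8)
The plan is to mimic the energy-conservation argument already carried out in Lemma~13.2 of \cite{Larsson03} and in the proof of Lemma~\ref{lem:discreteschemeestimate} above, but restricting all test functions to the multiscale space $\VmsHk$. Since \eqref{eq:localized_method} is posed with $f=0$, the right-hand side vanishes, and the goal is to exhibit a discrete quantity whose backward difference is identically zero. First I would choose the test function
\begin{align}
	v = \frac{1}{\tau}\big(\unmsHkhp - \unmsHkhm\big) = \frac{1}{2}\big(\fdiscd \unmsHk + \fdiscd \unmsHkprev\big),
\end{align}
which is a legitimate choice because each iterate $\unmsHk$ lies in $\VmsHk$ and this space is a fixed linear subspace, so any linear combination of iterates remains in $\VmsHk$.

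Next I would treat the two terms on the left-hand side exactly as in Lemma~\ref{lem:discreteschemeestimate}. For the mass term, expanding $\fdiscd\bdiscd\unmsHk = (\fdiscd\unmsHk - \fdiscd\unmsHkprev)/\tau$ and pairing against $v$ gives, by the symmetry of $M$,
\begin{align}
	(M\fdiscd\bdiscd\unmsHk, v) = \frac{1}{2\tau}\big(M(\fdiscd\unmsHk - \fdiscd\unmsHkprev), \fdiscd\unmsHk + \fdiscd\unmsHkprev\big) = \frac{1}{2}\bdiscd\mnorm{\fdiscd\unmsHk}^2.
\end{align}
For the stiffness term, using $v = \tfrac{1}{\tau}(\unmsHkhp - \unmsHkhm)$ together with the identity $\tfrac{1}{2}(\unmsHkhp + \unmsHkhm)$ and the symmetry of $K$ yields
\begin{align}
	\big(K\tfrac{1}{2}(\unmsHkhp + \unmsHkhm), v\big) = \frac{1}{2\tau}\big(K(\unmsHkhp + \unmsHkhm), \unmsHkhp - \unmsHkhm\big) = \frac{1}{2}\bdiscd\knorm{\unmsHkhp}^2.
\end{align}
Adding these and using that the right-hand side is zero gives $\bdiscd\big(\mnorm{\fdiscd\unmsHk}^2 + \knorm{\unmsHkhp}^2\big) = 0$, i.e.\ the discrete energy is constant in $n$. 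Summing (telescoping) this relation down to $n=0$ produces the claimed identity with right-hand side $\mnorm{\fdiscd u^{\ms,0}_{H,k}}^2 + \knorm{u^{\ms,1/2}_{H,k}}^2$.

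I do not anticipate a genuine obstacle here, since the argument is a verbatim transcription of the known finite-element proof with $V$ replaced by $\VmsHk$. The only point requiring a moment's care is the admissibility of the test function: one must verify that $\unmsHkhp - \unmsHkhm$ (equivalently the half-sum and half-difference combinations) indeed belongs to $\VmsHk$, which holds precisely because $\VmsHk$ is a linear subspace and every iterate is computed within it. The algebraic manipulations of the $M$- and $K$-terms rely only on the symmetry of $M$ and $K$, both of which are assumed in Section~\ref{s:notation}, so nothing about the network structure or the localization parameter $k$ enters the argument.
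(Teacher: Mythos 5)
Your proof is correct and is exactly the argument the paper invokes: the paper proves this proposition by citing the energy-conservation computation of \cite[Lemma 13.2]{Larsson03} with $\VmsHk$ as the test space, which is precisely your choice of test function $v=\tfrac{1}{\tau}(\unmsHkhp-\unmsHkhm)$ and the resulting identity $\bdiscd\big(\mnorm{\fdiscd\unmsHk}^2+\knorm{\unmsHkhp}^2\big)=0$. Your additional remark on the admissibility of the test function (it lies in the linear subspace $\VmsHk$) is the one point the paper leaves implicit, and it is handled correctly.
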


\section{Error bounds}\label{s:error_estimates}

In this section we derive error bounds for the localized method \eqref{eq:localized_method}. Before proving the main results, we introduce the necessary operators and their corresponding stability estimates required for the analysis.

\subsection{Operators and their estimates}

For the error analysis, we require a projection onto the space $\VH$ with respect to the $M$-inner product. Therefore, let $\PH: V\rightarrow \VH$ be defined such that $\PH v \in \VH$ solves
\begin{align}
	(M\PH v, w) = (Mv, w), \quad \forall w\in \VH.
	\label{eq:PH}
\end{align}
Moreover, we introduce the global Ritz-projection onto the multiscale space $\Vms$, i.e., the operator $\Rms : V\rightarrow \Vms$ such that $\Rms v\in \Vms$ solves
\begin{align}
(K\Rms v, w) = (Kv, w), \quad \forall w\in \Vms.
\end{align}
Similarly, we define its localized version $\Rmsk: V\rightarrow \VmsHk$ such that $\Rmsk v\in \VmsHk$ solves
\begin{align}
	(K\Rmsk v, w) = (Kv, w), \quad \forall w\in \VmsHk.
\end{align}
These operators, together with the localized corrector operator $\calQ_k$, play crucial roles in the error analysis. The following lemma on their stability will be frequently used.
\begin{lemma}\label{lem:stability_of_operators}
	The operators $\Rmsk$ and $\Qk$ are both stable with respect to $\knorm{\cdot}$, i.e., 
	\begin{align}
	\knorm{\Rmsk v} \leq \knorm{v}, \qquad
	\knorm{\Qk v} \leq \knorm{v}.
	\end{align}
\end{lemma}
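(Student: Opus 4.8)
The plan is to handle both operators through the $K$-inner product $a(\cdot,\cdot) := (K\cdot,\cdot)$, which is a genuine inner product on $V$ since $K$ is symmetric and positive definite there. The estimate for $\Rmsk$ is then a standard Galerkin/Céa argument, while the estimate for $\Qk$ must descend to the element correctors $\calQ^T_k$ and is the delicate part.

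For $\Rmsk$, since $\Rmsk v \in \VmsHk$, I would test its defining relation $(K\Rmsk v, w) = (Kv, w)$ with the admissible choice $w = \Rmsk v$. This gives the energy identity $\knorm{\Rmsk v}^2 = (Kv, \Rmsk v)$, and the Cauchy--Schwarz inequality for the $K$-inner product yields $\knorm{\Rmsk v}^2 \leq \knorm{v}\,\knorm{\Rmsk v}$. Dividing by $\knorm{\Rmsk v}$ (the bound being trivial when this vanishes) gives $\knorm{\Rmsk v} \leq \knorm{v}$; that is, $\Rmsk$ is the $K$-orthogonal projection onto $\VmsHk$ and hence a contraction.

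For $\Qk = \sum_{T\in\TH} \calQ^T_k$, I would first bound each element contribution. Testing the defining equation $(K\calQ^T_k v, w) = (K_T v, w)$ with $w = \calQ^T_k v \in W^T_k$ gives $\knorm{\calQ^T_k v}^2 = (K_T v, \calQ^T_k v)$. Applying the Cauchy--Schwarz inequality for the positive semi-definite form $K_T$, together with the domination $(K_T \phi,\phi) \leq (K\phi,\phi)$ (which follows from $K = \sum_{T'} K_{T'}$ and the positivity of each $K_{T'}$), yields $\knorm{\calQ^T_k v} \leq (K_T v, v)^{1/2}$. Summing the squares and using $\sum_T (K_T v, v) = (Kv, v)$ produces the global sum-of-squares bound $\sum_T \knorm{\calQ^T_k v}^2 \leq \knorm{v}^2$.

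The main obstacle is the final passage from this sum-of-squares estimate to a bound on $\knorm{\Qk v} = \knorm{\sum_T \calQ^T_k v}$, i.e.\ from the correctors' individual energies to the energy of their sum. Because the patches $U_k(T)$ overlap, the correctors are not mutually $K$-orthogonal, the cross terms $a(\calQ^T_k v, \calQ^{T'}_k v)$ do not vanish, and a crude triangle inequality is far too lossy. The natural route to the stated constant is to mirror the \emph{global} corrector $\calQ$, which (with $W^T_k$ enlarged to all of $W$) is precisely the $K$-orthogonal projection onto $W$ and hence a contraction; the task is then to control the defect introduced by restricting the test space to the local spaces $W^T_k$. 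I would exploit the locality of $K$ — so that each $\calQ^T_k v$ couples in the $K$-inner product only to correctors on finitely many neighbouring patches — to organise the cross terms, in combination with the energy-minimisation characterisation of $\calQ^T_k v$ on $W^T_k$. Reconciling this finite-overlap bookkeeping with the projection interpretation of the global corrector is exactly where the argument concentrates, and where the constant must be tracked carefully to confirm that it remains $1$.
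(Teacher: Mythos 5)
Your treatment of $\Rmsk$ is exactly the paper's proof: test the defining relation with $w=\Rmsk v$, apply Cauchy--Schwarz in the $K$-inner product, and divide; that half is correct and complete. The genuine gap is in the second half: you never prove $\knorm{\Qk v}\le\knorm{v}$. You correctly derive the element-wise bound $\knorm{\calQ^T_k v}\le (K_T v,v)^{1/2}$ and the sum-of-squares estimate $\sum_{T\in\TH}\knorm{\calQ^T_k v}^2\le\knorm{v}^2$, but then only announce a plan (finite-overlap bookkeeping combined with energy minimisation) for passing to the norm of the sum $\sum_{T\in\TH}\calQ^T_k v$. That plan cannot deliver the stated constant: since $(K\calQ^T_k v,\calQ^{T'}_k v)$ can be nonzero for all of the $O(k^d)$ elements $T'$ whose patches meet $U_k(T)$, overlap counting gives at best
\begin{align}
\knorm{\Qk v}^2 \le Ck^d\sum_{T\in\TH}\knorm{\calQ^T_k v}^2 \le Ck^d\knorm{v}^2,
\end{align}
i.e.\ a constant of order $k^{d/2}$, not $1$. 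So, as submitted, the second assertion of the lemma is unproved.

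For comparison, the paper's own proof dismisses this half with the words ``and similarly for $\Qk$'', i.e.\ it runs the same test-with-itself argument. That argument needs the chain $\knorm{\Qk v}^2=(K\Qk v,\Qk v)=(Kv,\Qk v)\le\knorm{v}\knorm{\Qk v}$, whose middle equality requires $\Qk v$ to be an admissible test function in each local problem defining $\calQ^T_k$; this fails because $\supp(\Qk v)$ is not contained in $U_k(T)$ --- precisely the obstruction you identified. (The one-line argument is valid verbatim for the unlocalized corrector $\calQ$, since $\calQ v\in W$ is admissible in the global corrector problem.) So your instinct that the naive ``similarly'' does not go through for the localized corrector is sound, and you have isolated a real subtlety that the paper's terse proof glosses over; but having flagged it, you needed either a complete argument recovering the constant $1$ or a proof of a weakened, $k$-dependent statement, and the proposal delivers neither.
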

\begin{proof}
	For $\Rmsk$, the stability follows straight-forwardly as
	\begin{align}
	\knorm{\Rmsk v}^2 = (K\Rmsk v, \Rmsk v) = (Kv, \Rmsk v) \leq \knorm{v} \knorm{\Rmsk v},
	\end{align}
	and similarly for $\Qk$.
\end{proof}

We further require stability of the operator $\PH$ in $K$-norm. To prove the stability, we first require an inverse inequality for shape-regular meshes, stated in the following lemma. In the following we use the notation $\lesssim$ to indicate that there may be a constant independent of the mesh size parameter $H$ that is not explicitly stated.

\begin{lemma}\label{lem:inverse_inequality}
	Any function $v\in \VH$ satisfies the inverse inequality
	\begin{align}
		\big| v \big|_{L,T} \leq \com{C_{d,\sigma}} H^{-1}\big| v \big|_{M,T}.
		\label{eq:inverse_inequality}
	\end{align}
\end{lemma}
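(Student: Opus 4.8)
The plan is to follow the classical recipe for the finite element inverse inequality — scale to a reference element, invoke a norm equivalence on the resulting finite-dimensional space, and scale back — while accounting for the fact that $|v|_{L,T}$ and $|v|_{M,T}$ are discrete, network-dependent (semi)norms rather than the usual $|\nabla v|_{L^2(T)}$ and $\|v\|_{L^2(T)}$. The organizing observation is that the mesh parameter $H$ factors out completely under rescaling. With $T=B_{H/2}(x_T)$ and the affine map $\hat x=(x-x_T)/H$ onto $\hat T=[0,1]^d$, every edge transforms as $|x-y|=H|\hat x-\hat y|$ while the nodal values of a multilinear $v$ are unchanged, so that $|v|_{L,T}^2=H^{-1}|\hat v|_{\hat L,\hat T}^2$ and $|v|_{M,T}^2=H\,|\hat v|_{\hat M,\hat T}^2$. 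Hence the claim is equivalent to the $H$-independent reference inequality $|\hat v|_{\hat L,\hat T}\le C\,|\hat v|_{\hat M,\hat T}$, and it remains to produce that $C$ depending only on $d$ and $\sigma$.

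Next I would establish the reference inequality through the chain of estimates (written here in the unscaled variables). First, since by the locality assumption every edge has length $<R_0\le H$, each edge incident to a node $x\in\calN(T)$ stays inside the star $\omega_T$ of $T$, and a multilinear (hence Lipschitz) finite element function satisfies $|v(x)-v(y)|\le\|\nabla v\|_{L^\infty(\omega_T)}|x-y|$. Substituting into the definition of $|v|_{L,T}$ and using $\tfrac12\sum_{x\in\calN(T)}\sum_{y\sim x}|x-y|=|1|_{M,T}^2$ gives $|v|_{L,T}^2\le\|\nabla v\|_{L^\infty(\omega_T)}^2\,|1|_{M,T}^2$. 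Second, on a box of side $H$ each first partial derivative of a multilinear function is a convex combination of vertex-value differences divided by $H$, so $\|\nabla v\|_{L^\infty(\omega_T)}\le C_d H^{-1}\|v\|_{L^\infty(\omega_T)}$, and by the maximum principle for multilinear functions $\|v\|_{L^\infty(\omega_T)}=\max_{\text{vertices of }\omega_T}|v|$.

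The main obstacle — the only genuinely network-specific step — is to control this continuum maximum by the discrete $M$-weighted nodal norm, i.e. a \emph{sampling} estimate $\max_{\text{vertices of }\omega_T}|v|^2\cdot|1|_{M,T}^2\le C_{d,\sigma}^2\,|v|_{M,T}^2$ (equivalently $\max|v|^2\lesssim|v|_{M,T}^2/(\rho H^d)$). In the classical setting this is automatic because the reference space is fixed; here the network inside $\omega_T$ varies from element to element, so uniformity must be extracted from the homogeneity assumption. That assumption yields $\rho H^d\lesssim|1|_{M,T}^2\le\sigma\rho H^d$ and, crucially, forces the nodes to be spread densely enough across $\omega_T$ that they cannot all cluster where $v$ is small; this is exactly where the uniformity constant $\sigma$ enters and produces $C_{d,\sigma}$. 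Combining the three ingredients and inserting $|1|_{M,T}^2\le\sigma\rho H^d$ gives $|v|_{L,T}^2\le C_{d,\sigma}^2 H^{-2}|v|_{M,T}^2$, which is the claim. One technical point I would treat separately is that an edge incident to $x\in\calN(T)$ may cross into a neighbouring element; this is harmless because such edges are short (locality) and the stars $\omega_T$ have bounded overlap, so the per-element constants aggregate into a single $C_{d,\sigma}$ when the estimate is later summed over $\TH$.
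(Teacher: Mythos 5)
Your scaffolding is the same as the paper's: rescale $T$ to the unit box, observe that $H$ factors out exactly ($\big|v\big|^2_{L,T}=H^{-1}\big|\hat v\big|^2_{L,\hat T}$ and $\big|v\big|^2_{M,T}=H\big|\hat v\big|^2_{M,\hat T}$, precisely the identities in the paper), and reduce to an $H$-independent equivalence on the reference element. The genuine gap is at the step you yourself call the main obstacle: the sampling estimate is asserted, not proved. Writing that homogeneity ``forces the nodes to be spread densely enough across $\omega_T$ that they cannot all cluster where $v$ is small'' is a paraphrase of the desired conclusion, not an argument, and the point is delicate: homogeneity controls mass only at scale $R_0$, while the lemma is used with $H$ as small as $R_0$, so there is no automatic separation between node spacing and element size. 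This is exactly where the paper does its work: it expands $v=\sum_{i=1}^{2^d}\alpha_i\varphi_i$ in the element basis, writes $\big|v\big|^2_{M,\hat T}=\alpha^T\Xi\alpha$ with $\Xi_{ij}=(M_{\hat T}\varphi_i,\varphi_j)$, and invokes the smallest-eigenvalue bound $\lambda_1(\Xi)\geq C_d\rho$, established as in \cite[Lemma~3.4]{EdeGHKM22}. Since the corner values of a multilinear function are its basis coefficients, your $\max_{\text{corners}}|v|$ is comparable (up to $C_d$) to $|\alpha|$, so your step 3 is mathematically the same statement as this eigenvalue bound --- and without a proof of it the entire content of the lemma is missing.

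Separately, organizing the estimate through the star $\omega_T$ cannot work as stated. Your sampling inequality $\max_{\text{vertices of }\omega_T}|v|^2\cdot\big|1\big|^2_{M,T}\leq C^2_{d,\sigma}\big|v\big|^2_{M,T}$ is false: take $v\in\VH$ vanishing identically on $T$ (then $\big|v\big|_{M,T}=0$, since every network node in $T$ has value zero) but nonzero on an adjacent element; the left-hand side is positive because $\big|1\big|_{M,T}>0$ by homogeneity. Your bounded-overlap remark repairs only the \emph{global} inequality $\lnorm{v}\lesssim H^{-1}\mnorm{v}$ obtained after summing over $\TH$ --- which is indeed all that Lemma~\ref{lem:PH_stability} needs --- but it does not rescue the per-element inequality that Lemma~\ref{lem:inverse_inequality} actually asserts. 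To close the argument element by element you must keep everything inside $T$: control $v(x)-v(y)$ through the element-basis coefficients (equivalently, the polynomial extension of $v|_T$, which is how the paper implicitly treats edges whose second endpoint leaves $T$), so that only the $2^d$ corners of $T$, and not of $\omega_T$, ever enter the bound.
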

\begin{proof}
	For $T\in \TH$, define $\hat{T} := \{x/H : x \in T\}$ as the hypercube scaled to unit size, and define for $v \in V_H$ the function $\hat{v}$ by 
	\begin{align}
	\hat{v}(\hat{x}) = v(H\hat{x}), \quad \hat{x} \in \hat{T}.
	\end{align}
	\com{We claim that the $\big|\cdot\big|_{L,\hat{T}}$-norm can be bounded by the $\big|\cdot\big|_{M,\hat{T}}$-norm as
	\begin{align}\label{eq:equivalence_of_norms}
		\big|v\big|_{L,\hat{T}} \leq C_{d,\sigma} \big|v\big|_{M,\hat{T}}, \quad \forall v\in \VH.
	\end{align}
	To show this, we first note that
	\begin{align}
		|v(x) - v(y)|  &= \Big| \sum_{i=1}^{2^d} \alpha_i (\varphi_i(x) - \varphi_i(y)) \Big| \\
		&\leq \underbrace{\Big( \sum_{i=1}^{2^d} \alpha_i^2 \Big)^{1/2}}_{=:|\alpha|} \Big( \sum_{i=1}^{2^d} (\varphi_i(x) - \varphi_i(y))^2 \Big)^{1/2} \\
		&\leq |\alpha| |x-y| 2^{d/2}\sqrt{d},
	\end{align}
	where in the final step we have used the Lipschitz continuity of $\varphi_i(\cdot)$ with Lipschitz constant $\sqrt{d}$. Moreover, denote by $\Xi_{ij} = (M_{\hat{T}}\varphi_i, \varphi_j)$ and we note that
	\begin{align}
		\big|v\big|^2_{M,\hat{T}} = (M_{\hat{T}}v,v) = \alpha^T \Xi \alpha \geq \lambda_1(\Xi)|\alpha|^2 \geq C_d \rho |\alpha|^2,
	\end{align}
	where $\lambda_1(\Xi)$ denotes the smallest eigenvalue of $\Xi$, whose bound used in the last step follows in the same way as in the proof of \cite[Lemma 3.4]{EdeGHKM22}. The claim now follows, since
	\begin{align}
		\big|v\big|_{L,\hat{T}}^2 &= \sum_{x\in \hat{T}} \frac{1}{2}\sum_{y\sim x} \frac{(v(x) - v(y))^2}{|x-y|} \leq \sum_{x\in \hat{T}} \frac{1}{2} \sum_{y\sim x} |\alpha|^2 |x-y| 2^{d}d \\
		&\leq 2^{d}d |\alpha|^2 \big| 1 \big|_{M,\hat{T}} \leq 2^{2d}d|\alpha|^2 \sigma \rho \leq C_d\sigma \big|v\big|_{M,\hat{T}}^2.
	\end{align}}
	Now, it remains to find the relation between the norms of $\hat{v}$ on $\hat{T}$ and $v$ on $T$. We first note that
	\begin{align*}
	\big| v \big|^2_{M,T} &= \frac{1}{2} \sum_{x\in \calN(T)} \sum_{y\sim x} |x - y| v(x)^2 = \frac{1}{2} \sum_{\hat{x}\in \calN(\hat{T})} \sum_{\hat{y}\sim \hat{x}} H|\hat{x} - \hat{y}| v(H\hat{x})^2 = H\big| \hat{v}\big|^2_{M,\hat{T}},
	\end{align*}
	where the substitution $x = H\hat{x}$ was applied. Similarly, we see for the remaining norm that
	\begin{align*}
	\big| v\big|^2_{L,T} &= \frac{1}{2} \sum_{x\in \calN(T)} \sum_{y\sim x} \frac{(v(x) - v(y))^2}{|x - y|} = \frac{1}{2} \sum_{\hat{x}\in \calN(\hat{T})} \sum_{\hat{y}\sim \hat{x}} \frac{(v(H\hat{x}) - v(H\hat{y}))^2}{H|\hat{x} - \hat{y}|} = H^{-1}\big| \hat{v} \big|^2_{L,\hat{T}}.
	\end{align*}
	Inserting these expressions into \eqref{eq:equivalence_of_norms} yields 
	\begin{align}
	\big| v \big|_{L,T} \leq \com{C_{d,\sigma}}H^{-1}\big| v \big|_{M,T}.
	\end{align}
\end{proof}

Using Lemma \ref{lem:inverse_inequality}, we may furthermore show the stability of $\PH$.
\begin{lemma}\label{lem:PH_stability}
	The operator $\PH$ defined in \eqref{eq:PH} satisfies the stability estimate
	\begin{align}
		\knorm{\PH v} \lesssim \knorm{v}, \quad \forall v\in V.
	\end{align}
\end{lemma}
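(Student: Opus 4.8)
The plan is to exploit the norm equivalence \eqref{eq:KL_relation}, which gives $\knorm{w} \le \sqrt{\beta}\,\lnorm{w}$ and $\lnorm{w} \le \alpha^{-1/2}\knorm{w}$ for all $w\in V$. Hence it suffices to prove the scale-free bound $\lnorm{\PH v} \lesssim \lnorm{v}$, since then $\knorm{\PH v} \le \sqrt{\beta}\,\lnorm{\PH v} \lesssim \sqrt{\beta}\,\lnorm{v} \le \sqrt{\beta/\alpha}\,\knorm{v}$. So from now on I work entirely with the $L$-seminorm.

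The key difficulty is that $\PH v \in \VH$, so the inverse inequality (Lemma \ref{lem:inverse_inequality}) gives only $\lnorm{\PH v} \le C_{d,\sigma} H^{-1}\mnorm{\PH v}$, and the bare $M$-stability $\mnorm{\PH v}\le\mnorm{v}$ of the projection leaves a factor $H^{-1}$ that cannot be absorbed. The trick is to measure $\PH v$ against the interpolant $\cI v$, which also lies in $\VH$ but comes with the $H$-scaled estimates of Lemma \ref{lem:interpolation_bound}. Concretely I would split $\lnorm{\PH v} \le \lnorm{\PH v - \cI v} + \lnorm{\cI v}$, bound the second term immediately by $\lnorm{\cI v}\le C\lnorm{v}$ from Lemma \ref{lem:interpolation_bound}, and apply the inverse inequality (summed over $T\in\TH$, which is legitimate since the half-open boxes partition $\calN$ so $\sum_T M_T = M$, while each edge contributes to $\sum_T L_T$ at least once) to the first term to get $\lnorm{\PH v - \cI v} \le C_{d,\sigma}H^{-1}\mnorm{\PH v - \cI v}$.

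It remains to bound $\mnorm{\PH v - \cI v}$ by something of order $H\lnorm{v}$ so the powers of $H$ cancel. Writing $\mnorm{\PH v - \cI v} \le \mnorm{\PH v - v} + \mnorm{v - \cI v}$, I would handle the second summand by $\mnorm{v-\cI v}\le CH\lnorm{v}$ (Lemma \ref{lem:interpolation_bound}) and the first by the defining best-approximation property of the $M$-orthogonal projection $\PH$ from \eqref{eq:PH}: since $\cI v \in \VH$, one has $\mnorm{v-\PH v}\le\mnorm{v-\cI v}\le CH\lnorm{v}$. Combining, $\mnorm{\PH v - \cI v}\lesssim H\lnorm{v}$, so $\lnorm{\PH v - \cI v}\lesssim C_{d,\sigma}\,\lnorm{v}$, and then $\lnorm{\PH v}\lesssim\lnorm{v}$, which yields the claim after reinserting the $\alpha,\beta$ equivalence.

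The main obstacle is precisely this cancellation of $H^{-1}$ against $H$: the naive route through $M$-stability fails, and the crucial observation is that $\PH v$ is the $M$-\emph{best} approximation in $\VH$, which lets one dominate $\mnorm{v-\PH v}$ by the interpolation error $\mnorm{v-\cI v}=\mathcal{O}(H\lnorm{v})$. The only other point requiring care is justifying the global (summed) form of the inverse inequality from its element-local statement, which follows from the node-partition property of the mesh boxes.
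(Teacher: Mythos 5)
Your proposal is correct and takes essentially the same route as the paper's proof: both reduce $K$-stability to $L$-stability via \eqref{eq:KL_relation}, split off the interpolant $\cI v$, apply the inverse inequality of Lemma~\ref{lem:inverse_inequality} (in its summed, global form) to the $\VH$-part, and cancel the resulting $H^{-1}$ against the $\mathcal{O}(H\lnorm{v})$ interpolation error from Lemma~\ref{lem:interpolation_bound}. The only cosmetic difference is that you bound $\mnorm{\PH v - \cI v}$ by a triangle inequality plus the best-approximation property of $\PH$, whereas the paper uses the identity $\PH v - \cI v = \PH(v - \cI v)$ together with the contraction property $\mnorm{\PH w}\leq\mnorm{w}$ --- two equivalent consequences of $\PH$ being the $M$-orthogonal projection onto $\VH$.
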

\begin{proof}
	First of all, note that using \eqref{eq:inverse_inequality}, we have for $v\in \VH$
	\begin{align}
		\lnorm{v}^2 = \sum_{T\in \TH} \big| v \big|^2_{L,T} \leq C\sum_{T\in \TH} H^{-2} \big| v \big|^2_{M,T} \leq C H^{-2} \mnorm{v}^2,
	\end{align}
	where the constant $C$ depends on the dimension $d$. The stability of $\PH$ in $L$-norm then follows, since
	\begin{align}
		\lnorm{\PH v} &= \lnorm{\PH(v-\cI v) + \cI v} \\
		&\leq \lnorm{\PH(v-\cI v)} + \lnorm{\cI v} \\
		&\leq CH^{-1}\mnorm{\PH(v-\cI v)} + \lnorm{\cI v} \\
		&= CH^{-1}\mnorm{v-\cI v} + \lnorm{\cI v} \\
		&\leq C\lnorm{v},
	\end{align}
	where the last inequality follows from the interpolation estimate in  Lemma~\ref{lem:interpolation_bound}. By \eqref{eq:KL_relation}, stability in $K$-norm follows as well.
\end{proof}

We continue by stating the necessary exponential decay that the localized corrector operator satisfy, in the following lemma, proven in \cite[Lemma 4.7]{EdeGHKM22}. The proof utilizes the same assumptions made in this paper, and thus the result follows accordingly.
\begin{lemma}\label{lem:decay_of_localized_corrector}
	The error between the global corrector operator $\calQ$ and its localized counterpart $\Qk$, satisfies
	\begin{align}
	\knorm{(\calQ-\Qk)v} \leq C\com{k^{d/2}} \exp(-ck)\knorm{v}
	\end{align}
	for $v\in V$, where the constants are independent of the variations in the data inherited by the network model.
\end{lemma}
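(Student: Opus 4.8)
The plan is to adapt the standard localized orthogonal decomposition localization argument to the present network setting, proceeding in three stages: (i) an exponential decay estimate for the \emph{global} element corrector away from its source element; (ii) a quasi-optimality argument bounding the element-wise localization error by this decay; and (iii) a summation over elements exploiting the finite overlap of patches, which produces the algebraic prefactor $k^{d/2}$. To set up, for each $T\in\TH$ introduce the global element corrector $\calQ^T v\in W$ solving $(K\calQ^T v, w) = (K_T v, w)$ for all $w\in W$, so that $\calQ v=\sum_{T\in\TH}\calQ^T v$ and correspondingly $\Qk v=\sum_{T\in\TH}\calQ^T_k v$ with $\calQ^T_k$ as in \eqref{eq:element_correction}. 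Since $W^T_k\subseteq W$, the localized corrector $\calQ^T_k v$ is exactly the $K$-orthogonal projection of $\calQ^T v$ onto $W^T_k$, whence the best-approximation identity
\begin{align}
\knorm{(\calQ^T-\calQ^T_k)v} = \min_{w\in W^T_k}\knorm{\calQ^T v - w}.
\end{align}

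The heart of the argument is a one-ring decay estimate for $w:=\calQ^T v$. I would fix a nodal cutoff $\eta\in\hVH$ with $\eta\equiv 0$ on $U_{j-1}(T)$, $\eta\equiv 1$ outside $U_j(T)$, and values in $[0,1]$, noting that its discrete edge-differences are bounded across the single transition layer. The key observation is that $w$ is $K$-harmonic in $W$ away from $T$: for any $\phi\in W$ vanishing on the nodes touched by $T$ one has $(Kw,\phi)=(K_T v,\phi)=0$. Testing the corrector equation with the admissible function $z:=\eta w-\cI(\eta w)\in W$ (admissible since $\cI$ is idempotent on $\VH$, so $\cI z=0$) and using that $z$ vanishes near $T$, I obtain $(Kw,\eta w)=(Kw,\cI(\eta w))$. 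A discrete Leibniz expansion bounds the left side from below by the energy of $w$ on $\{\eta\equiv1\}$ minus transition-layer cross terms, while the right side is controlled on the transition layer alone by combining the interpolation stability of Lemma~\ref{lem:interpolation_bound} with the inverse inequality of Lemma~\ref{lem:inverse_inequality}. This yields that the energy of $w$ outside $U_j(T)$ is bounded by a fixed multiple of its energy in the single ring $U_j(T)\setminus U_{j-1}(T)$; writing $e_j:=\big|w\big|^2_{K,\Omega\setminus U_j(T)}$ and absorbing, I get $e_j\le \theta\,e_{j-1}$ for some $\theta\in(0,1)$ independent of the network data, hence $e_j\le \theta^{j}e_0$. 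Choosing $\eta$ to cut off at the boundary of $U_k(T)$ and inserting this decay into the best-approximation identity gives the element-wise bound $\knorm{(\calQ^T-\calQ^T_k)v}\le C\exp(-ck)\,\big|v\big|_{K,T}$, where I also use the local stability $\knorm{\calQ^T v}\le \big|v\big|_{K,T}$.

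For the final summation, set $z:=(\calQ-\Qk)v=\sum_{T\in\TH}z_T$ with $z_T:=(\calQ^T-\calQ^T_k)v$. Each $z_T\in W$ is $K$-orthogonal to $W^T_k$, and a standard cutoff argument (again relying on Lemma~\ref{lem:interpolation_bound}) converts this orthogonality into the localized pairing bound $(Kz_T,z)\le C\knorm{z_T}\,\big|z\big|_{K,U_k(T)}$. Summing over $T$ and applying Cauchy--Schwarz,
\begin{align}
\knorm{z}^2 = \sum_{T\in\TH}(Kz_T,z) \le C\Big(\sum_{T\in\TH}\knorm{z_T}^2\Big)^{1/2}\Big(\sum_{T\in\TH}\big|z\big|^2_{K,U_k(T)}\Big)^{1/2}.
\end{align}
The first factor is bounded by $C\exp(-ck)\big(\sum_T\big|v\big|^2_{K,T}\big)^{1/2}=C\exp(-ck)\knorm{v}$ by the element-wise estimate and $\sum_T\big|v\big|^2_{K,T}=\knorm{v}^2$, and the second factor is bounded by $Ck^{d/2}\knorm{z}$ because each network node lies in at most $\mathcal{O}(k^{d})$ of the patches $\{U_k(T)\}_{T\in\TH}$. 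Dividing by $\knorm{z}$ yields the claimed bound $\knorm{(\calQ-\Qk)v}\le Ck^{d/2}\exp(-ck)\knorm{v}$.

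I expect the main obstacle to be the one-ring decay estimate of stage (ii), specifically controlling the transition-layer cross terms arising from the discrete product $\eta w$ on the graph. Unlike the continuous case there is no exact Leibniz rule; instead one must expand $(Kw,\eta w)$ edge by edge and bound the terms in which $\eta$ jumps, using the local comparison \eqref{eq:KL_relation} of $K$ with $L$ together with Lemmas~\ref{lem:interpolation_bound} and~\ref{lem:inverse_inequality}. The delicate point is to carry this out with all constants independent of the rapidly varying edge data inherited from the network, which is exactly what the network-adapted interpolation bound provides, and which is why the argument of \cite[Lemma~4.7]{EdeGHKM22} transfers to the present assumptions.
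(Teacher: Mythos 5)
Your proposal is correct and follows essentially the same route as the paper, which does not reprove this lemma but cites \cite[Lemma 4.7]{EdeGHKM22}: that proof is precisely your three-stage argument of cutoff-based exponential decay for the global element correctors, Galerkin best-approximation in $W^T_k$, and a finite-overlap summation over elements producing the $k^{d/2}$ prefactor. Your identification of the delicate point --- the absence of an exact Leibniz rule on the graph and the need for the network-adapted interpolation bound of Lemma~\ref{lem:interpolation_bound} to keep constants independent of the edge data --- is exactly why the paper's assumptions suffice for the cited proof to transfer.
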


\begin{remark}
	\com{The constant $c$ is proportional to $\sigma^{-1}\mu^{-2}\alpha\beta^{-1}$, see \cite[Section 3]{EdeGHKM22} and \cite[Section 5]{GorHM22} for the details. The Poincar\'{e} type constant $\mu$ is
	 investigated for networks used in the numerical examples of this paper in \cite[Section 3.1]{EdeGHKM22}. It is of moderate size unless the distribution of lines is sparse compared to the scale $H$. Since we aim for an overall error of order $H$ we will henceforth pick $k\sim \log(1/H)$. Note that the proportionality constant in this relation depends on $1/c\sim \mu^2\sigma\beta/\alpha$. The constant $C$ depends on $\sqrt{\beta/\alpha}$, see  proof of \cite[Lemma 4.7]{EdeGHKM22}. 
	
	We see the same effect when LOD is applied to elliptic PDEs (see, e.g., the original LOD paper \cite{MalP14}). The localization is negatively affected by high contrast in the diffusion. However, in practice, the localization tend to work well even in high contrast regimes, see the numerical examples sections in, e.g., \cite{MalP14, MalP18} and the final numerical example in this paper.}
\end{remark}

We finish the section on operators by deriving necessary estimates for the localized Ritz-projection $\Rmsk$. For this, we first require an estimate for the global operator $\Rms$, deduced in the following lemma. The proof is based on standard arguments for Ritz-projections, and is done in similarity with \cite[Lemma 3.2]{MalP18}, replacing the energy norm by the $K$-norm.

\begin{lemma}\label{lem:rho_estimate}
	For a solution $u\in V$ to \eqref{eq:refmethod}, it holds that
	\begin{align}
	\knorm{u - \Rms u} &\leq \frac{CH}{\sqrt{\alpha}} \mnorm{f- \ddu}.
	\label{eq:solution_rho_estimate_in_k_norm}
	\end{align}
\end{lemma}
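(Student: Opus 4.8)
The plan is to exploit the Galerkin orthogonality built into the definition of $\Rms$, combined with the interpolation error bound of Lemma~\ref{lem:interpolation_bound}; this is the standard Ritz-projection argument adapted to the network norms. Write $e := u - \Rms u$. The first step is to observe that $e$ lies in the fine-scale space $W$. Indeed, $\Rms u \in \Vms$ is by definition the $K$-orthogonal projection of $u$ onto $\Vms$, so $e$ is $K$-orthogonal to all of $\Vms$; since the splitting $V = \Vms \oplus W$ is $K$-orthogonal, this forces $e \in W$, and in particular $\cI e = 0$.

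Next I would convert the squared norm into a computable expression. Using the $K$-orthogonality of the splitting, $(K\Rms u, e) = 0$, so that
\begin{align}
\knorm{e}^2 = (Ke, e) = (Ku, e).
\end{align}
At this point the model equation enters: the strong form \eqref{eq:refmethod} yields, at each fixed time $t$, the identity $Ku = M(f - \ddu)$ between functions on the node set $\calN$, whence $\knorm{e}^2 = (M(f - \ddu), e)$. A Cauchy--Schwarz inequality in the $M$-inner product then gives $\knorm{e}^2 \leq \mnorm{f - \ddu}\,\mnorm{e}$.

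It remains to bound $\mnorm{e}$ by $\knorm{e}$ with the correct power of $H$. Since $\cI e = 0$, Lemma~\ref{lem:interpolation_bound} (valid for $H \geq R_0$ under Assumption~\ref{ass:network_properties}) produces
\begin{align}
\mnorm{e} = \mnorm{e - \cI e} \leq CH \lnorm{e},
\end{align}
and the coercivity assumption \eqref{eq:KL_relation} converts the $L$-seminorm into the $K$-norm through $\lnorm{e} \leq \alpha^{-1/2}\knorm{e}$. Combining these, $\mnorm{e} \leq (CH/\sqrt{\alpha})\,\knorm{e}$; substituting into the previous display and cancelling one factor of $\knorm{e}$ delivers the claim.

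No single step is genuinely hard, and the only delicate points are network-specific. The crucial observation is that $e \in W$, so that the interpolation bound applies with $\cI e = 0$; this is precisely where the $K$-orthogonality of the decomposition $V = \Vms \oplus W$ is needed. The other point requiring care is that the factor $H$ and its constant come from Lemma~\ref{lem:interpolation_bound}, which guarantees independence of the rapidly varying network data inherited by $K$, so that the final constant $C$ depends only on the dimension and the connectivity and uniformity of the network rather than on the data contrast.
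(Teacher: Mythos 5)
Your proof is correct and follows exactly the standard Ritz-projection argument that the paper invokes (it cites \cite[Lemma 3.2]{MalP18} with the energy norm replaced by the $K$-norm rather than writing out the details): Galerkin orthogonality places $u - \Rms u$ in $W = \ker(\cI)$, the strong form converts $(Ku, e)$ into an $M$-inner product, and Lemma~\ref{lem:interpolation_bound} together with the coercivity \eqref{eq:KL_relation} supplies the factor $CH/\sqrt{\alpha}$. Both the structure and the handling of the network-specific points (applicability of the interpolation bound and data-independence of the constant) match the intended argument.
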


The subsequent lemma shows similar estimates for the localized Ritz-projection. In the proof, the exponentially decaying factor from Lemma~\ref{lem:decay_of_localized_corrector} appears. However, we emphasize that choosing the localization parameter as $k\sim \log(1/H)$ makes the exponential factor proportional to $H$. We show two different results, where the first part holds for any arbitrary function $v\in V$ and requires less regularity, while the second holds for any solution $u\in V$ to \eqref{eq:reference_equation_weak_form}. The derived results are modifications of the result obtained in \cite[Lemma 5.3]{AbdH17}.

\begin{lemma}\label{lem:localized_ritz_projection_bounds}
	Let the localization parameter be chosen such that $k\sim \log(1/H)$. Then, for any function $v\in V$, the localized Ritz-projection satisfies the estimate
	\begin{align}
		\mnorm{\Rmsk v - v} \lesssim H\knorm{\Rmsk v - v} \lesssim H\knorm{v}. \label{eq:rmsk_mnorm_v}
	\end{align}
	Moreover, for the solution $u\in V$ to \eqref{eq:reference_equation_weak_form} and its derivatives, we have for $i=0,1,2,\ldots$
	\begin{align}
	\knorm{\Rmsk(D^i_t u) - D^i_t u} &\lesssim H\big(\mnorm{D^i_t f - D^{i+2}_tu} + \knorm{D^i_tu}\big).\label{eq:rmsk_knorm_u}
	\end{align}
\end{lemma}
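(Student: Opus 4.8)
The plan is to prove the two displays separately, treating the energy-norm estimate \eqref{eq:rmsk_knorm_u} first, since the $M$-norm estimate \eqref{eq:rmsk_mnorm_v} will be recovered from it by duality. Throughout I would use the identity $\Rms w = (I-\calQ)\cI w$, valid for every $w\in V$: indeed $w-\cI w\in W$ and $\calQ\cI w\in W$ are both $K$-orthogonal to $\Vms$, so $(I-\calQ)\cI w\in\Vms$ is exactly the $K$-projection of $w$ onto $\Vms$. In particular $w-\Rms w\in W=\ker\cI$, which is what keeps the ideal (global) estimates clean and underlies Lemma~\ref{lem:rho_estimate}.

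For \eqref{eq:rmsk_knorm_u}, I set $w=D^i_tu$ and observe that \eqref{eq:ref_method_with_higher_derivatives} gives $Kw=MF$ with $F:=D^i_tf-D^{i+2}_tu$. Since $\Rmsk w$ is the $K$-orthogonal projection of $w$ onto $\VmsHk$, it is the $K$-best approximation, so comparing with $(I-\Qk)\cI w\in\VmsHk$ and using the identity above,
\begin{align}
\knorm{w-\Rmsk w}\le \knorm{w-(I-\Qk)\cI w}\le \knorm{w-\Rms w}+\knorm{(\calQ-\Qk)\cI w}.
\end{align}
The first term is bounded by Lemma~\ref{lem:rho_estimate}, whose proof applies verbatim to $Kw=MF$, yielding $\knorm{w-\Rms w}\lesssim H\mnorm{F}$. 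The second is handled by the decay estimate of Lemma~\ref{lem:decay_of_localized_corrector} together with the $K$-stability of $\cI$ coming from Lemma~\ref{lem:interpolation_bound}, giving $\knorm{(\calQ-\Qk)\cI w}\lesssim k^{d/2}\exp(-ck)\knorm{w}$; the choice $k\sim\log(1/H)$ absorbs the polynomial prefactor and renders this $\lesssim H\knorm w$. Combining the two bounds proves \eqref{eq:rmsk_knorm_u}. I note that this argument in fact delivers, for \emph{any} $w\in V$, the elliptic estimate $\knorm{w-\Rmsk w}\lesssim H(\mnorm{M^{-1}Kw}+\knorm w)$, which is the form I will use below.

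For \eqref{eq:rmsk_mnorm_v}, the rightmost inequality is simply the $K$-stability of $\Rmsk$ from Lemma~\ref{lem:stability_of_operators}, since $\knorm{\Rmsk v-v}\le\knorm{\Rmsk v}+\knorm v\le 2\knorm v$. For the left inequality I would use Aubin--Nitsche duality. Writing $e:=\Rmsk v-v$, let $z\in V$ solve $(Kz,\phi)=(Me,\phi)$ for all $\phi\in V$, i.e.\ $Kz=Me$. Since $(Ke,\chi)=0$ for all $\chi\in\VmsHk$, testing the dual problem with $\phi=e$ and inserting $\Rmsk z$ gives
\begin{align}
\mnorm{e}^2=(Me,e)=(Kz,e)=(Ke,z)=(Ke,z-\Rmsk z)\le\knorm{e}\,\knorm{z-\Rmsk z}.
\end{align}
The elliptic estimate applied to $z$ gives $\knorm{z-\Rmsk z}\lesssim H(\mnorm{e}+\knorm z)$, while the network Friedrichs/Poincaré inequality \cite[Lemma~3.2]{EdeGHKM22} yields $\knorm z\lesssim\mnorm e$ via $\knorm z^2=(Me,z)\le\mnorm e\,\mnorm z\lesssim\mnorm e\,\knorm z$. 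Hence $\knorm{z-\Rmsk z}\lesssim H\mnorm e$, and the chain collapses to $\mnorm e\lesssim H\knorm e$.

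I expect the main obstacle to be precisely this left inequality in \eqref{eq:rmsk_mnorm_v}. A direct split $\mnorm e\le\mnorm{e-\cI e}+\mnorm{\cI e}$ combined with Lemma~\ref{lem:interpolation_bound} only produces the weaker bound $\mnorm e\lesssim H\knorm v$, because the localization discrepancy satisfies $\cI e=\cI(\Rms v-\Rmsk v)$ and is therefore controlled by $\knorm v$ rather than by $\knorm e$. It is the duality argument that upgrades this to the sharp $\mnorm e\lesssim H\knorm e$, at the price of invoking the elliptic Ritz estimate for the dual solution and the network Poincaré inequality to absorb $\knorm z$ into $\mnorm e$.
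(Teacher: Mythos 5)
Your proposal is correct, and its overall strategy is the same as the paper's: the energy bound \eqref{eq:rmsk_knorm_u} comes from comparing $\Rmsk w$ with a localized copy of the global Ritz projection (best approximation in $\VmsHk$, then Lemma~\ref{lem:rho_estimate} for the global error and Lemma~\ref{lem:decay_of_localized_corrector} for the localization error, with $k\sim\log(1/H)$ absorbing the prefactor), and the $M$-norm bound \eqref{eq:rmsk_mnorm_v} comes from Aubin--Nitsche duality. The differences are in execution, and they are worth recording. First, you make the identity $\Rms w=(I-\calQ)\cI w$ explicit, so the coarse component of your comparison function is $\cI w$, which you control by $\knorm{\cI w}\lesssim\knorm{w}$ via Lemma~\ref{lem:interpolation_bound} and \eqref{eq:KL_relation}. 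The paper instead leaves the coarse component as an abstract $\vH$ (applying $\cI$ to the decomposition shows $\vH=\cI v$, so the two objects coincide) and bounds it through the chain $\knorm{\vH}=\knorm{\PH\vH}=\knorm{\PH(\vH+w)}=\knorm{\PH v}\lesssim\knorm{v}$ using Lemma~\ref{lem:PH_stability}; that chain tacitly requires $\PH w=0$ for $w\in W$, i.e.\ $M$-orthogonality of $\ker\cI$ and $\VH$, which is not automatic for this Scott--Zhang-type interpolant. Your interpolation-stability route does not need this, so on that point your argument is the more robust of the two. Second, in the duality step the paper introduces the localized dual Galerkin solution $\zmsHk$ and reruns the minimization-plus-decay argument for it; since $\zmsHk=\Rmsk z$, that is exactly your step, which you shortcut by first abstracting the energy estimate into the reusable form $\knorm{w-\Rmsk w}\lesssim H\big(\mnorm{M^{-1}Kw}+\knorm{w}\big)$ and applying it to $z$ with $M^{-1}Kz=e$. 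Both proofs then need $\knorm{z}\lesssim\mnorm{e}$; you justify this through the network Friedrichs inequality of \cite{EdeGHKM22}, which the paper invokes at the same spot only implicitly. In short: same skeleton, but your version is more modular and sidesteps a shaky step in the paper's stability argument for the coarse component.
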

\begin{proof}
	We begin by deriving the estimate \eqref{eq:rmsk_knorm_u}. Define the functions $v := D^i_t u$ and $\hf = D^i_t f$, and observe that $v$ satisfies the equation
	\begin{align}
	(MD^2_tv, w) + (Kv, w) = (M\hf, w), \quad \forall w\in V.
	\end{align}
	Next, analogously to the proof of \cite[Lemma 5.3]{AbdH17}, define the energy
	\begin{align}
	E(\phi_H) := (K(v-\phi_H+\Qk \phi_H), v-\phi_H+\Qk \phi_H), \quad \phi_H\in \VH.
	\label{eq:energy_function}
	\end{align}
	We write the decomposition $\Rmsk v = \vHk - \Qk \vHk$, and claim that $\vHk \in \VH$ minimizes the energy $E(\cdot)$. This can be seen, since first of all
	\begin{align}
	(K(v-\Rmsk v), \phi_H - \Qk \phi_H) = 0, \quad \forall v\in \VH,
	\end{align}
	by the definition of $\Rmsk$. Therefore, by perturbing $\vHk$ with arbitrary $\eta \in \VH$, and denoting $\eta^{\mathrm{ms}}_k := \eta - \Qk \eta$, we see that
	\begin{align}
	E(\vHk + \eta) &= (K(v - \vHk + \Qk \vHk - \eta + \Qk \eta),v - \vHk + \Qk \vHk - \eta + \Qk \eta) \\
	&= (K(v-\Rmsk v - \eta^{\mathrm{ms}}_k), v-\Rmsk v - \eta^{\mathrm{ms}}_k)= (K(v-\Rmsk v), v) + \knorm{\eta^{\mathrm{ms}}_k}^2 \\
	&\geq (K(v-\Rmsk v), v) = (K(v-\Rmsk v), v - \Rmsk v) =  E(\vHk),
	\end{align}
	which shows the claim. Consequently, we can estimate
	\begin{align}
		\begin{split}
		\knorm{\Rmsk v - v} &= \knorm{\vHk - \Qk \vHk - v} \\
		&\leq \knorm{\vH - \Qk \vH - v} \\
		&= \knorm{\vH - \calQ\vH + \calQ\vH - \Qk \vH - v} \\
		&\leq \knorm{\Rms v - v} + \knorm{(\calQ - \Qk)\vH} \\
		&\leq \frac{CH}{\sqrt{\alpha}}\mnorm{\hf - D^2_t v} + C\com{k^{d/2}} \exp(-ck)\knorm{\vH},
		\end{split}
		\label{eq:expr}
	\end{align}
	where in the last step we applied \eqref{eq:solution_rho_estimate_in_k_norm} from Lemma \ref{lem:rho_estimate} for the first term, and Lemma \ref{lem:decay_of_localized_corrector} for the second. Furthermore, with $v = \vH + w$ for $w\in W$, we may apply the following estimate 
	\begin{align}
	\knorm{\vH} = \knorm{\PH \vH} = \knorm{\PH(\vH + w)}  = \knorm{\PH v} \lesssim \knorm{v},
	\label{eq:vh_to_v_bound}
	\end{align}
	where we used Lemma~\ref{lem:PH_stability} for the last inequality. Combine \eqref{eq:expr} with \eqref{eq:vh_to_v_bound}, and we have
	\begin{align}
		\knorm{\Rmsk v - v} \lesssim \frac{CH}{\sqrt{\alpha}}\mnorm{\hf - D^2_t v} + C \com{k^{d/2}}\exp(-ck)\knorm{v}.
	\end{align}
	Choose $k \sim \log(1/H)$ and we deduce \eqref{eq:rmsk_knorm_u}.
	
	For the estimates in $M$-norm, define the error $\rhok := \Rmsk v - v$, and consider the dual problem to find $z\in V$ such that
	\begin{align}
	(Kw, z) = (Mw, \rhok), \quad \forall w\in V,
	\end{align}
	and the similar dual problem to find $\zmsHk \in \VmsHk$ such that
	\begin{align}
	(Kw, \zmsHk) = (Mw, \rhok), \quad \forall w\in \VmsHk.
	\end{align}
	Subtracting the equations, we note that the Galerkin orthogonality $(Kw,z-\zmsHk) = 0$ holds for $w\in \VmsHk$. Likewise, we have the orthogonality $(Kw, z-\zms)=  0$ in the non-localized space $\Vms$, which implies that $z-\zms \in W = \ker(\cI)$. Using this, we first note that
	\begin{align}
	(K(z-\zms), z-\zms) &= (K(z-\zms), z) \\
	&= (M(z-\zms), \rhok) \\
	&= (M(z-\zms - \cI(z-\zms)), \rhok) \\
	&\leq CH \knorm{z-\zms} \mnorm{\rhok} .
	\end{align}
	Next, write $\zmsHk = \zHk - \Qk\zHk$ and note that it minimizes the energy $E(\cdot)$ in \eqref{eq:energy_function}, so in a similar fashion as before we get
	\begin{align}
	\knorm{z-\zmsHk} &= \knorm{z-\zHk + \Qk \zHk} \\
	&\lesssim \knorm{z-\zH + \Qk \zH} \\
	&= \knorm{z - \zH + Q\zH - Q\zH + \Qk \zH} \\
	&\lesssim \knorm{z-\zms} + \knorm{(Q-\Qk)\zH} \\
	&\lesssim H\mnorm{\rhok} + C\com{k^{d/2}}\exp(-ck)\knorm{\zH}.
	\end{align}
	Moreover, it holds that
	\begin{align}
	\knorm{\zH} =\knorm{\PH \zH} = \knorm{\PH(\zH - \Qk \zH)} = \knorm{\PH \zms} \lesssim \knorm{\zms} \lesssim \mnorm{\rhok},
	\end{align}
	where the last two inequalities follow from stability of $\PH$ in $K$-norm, and the fact that $\knorm{z} \lesssim \mnorm{\rhok}$ and $\knorm{z - \zms} \lesssim H\mnorm{\rhok}$. Now, we can deduce
	\begin{align}
	\mnorm{\rhok}^2 = (K\rhok, z) = (K\rhok, z-\zmsHk) \lesssim \knorm{\rhok}(H+C\com{k^{d/2}}\exp(-ck))\mnorm{\rhok},
	\label{eq:expr2}
	\end{align}
	where we used the dual problem for the first equality, and the second follows since $(K\Rmsk v, \zmsHk) = (Kv, \zmsHk)$, and thus $(K\rhok, \zmsHk) = 0$.  By choosing $k \sim \log(1/H)$ in \eqref{eq:expr2}, we get \eqref{eq:rmsk_mnorm_v}, which concludes the proof.
\end{proof}

\subsection{Error bounds}

This section is dedicated to the derivations of the main result of this paper, namely the optimal order convergence of the novel method stated in \eqref{eq:localized_method}. As a measure of error, we compare the approximate solution $\unmsHk$ to the reference solution $u(\tn)$ in $K$-norm, as well as the derivative $\fdiscd \unmsHk$ versus $D_t u(\tn)$ in $M$-norm, which is analogous to the standard measurement in the continuous setting for the linear wave equation. For convenience, we begin by deriving the error in $K$-norm in Theorem~\ref{lem:error_k_norm}, and then the error in $M$-norm in Theorem~\ref{lem:error_m_norm}, separately. We then summarize the main result as their combination, where we have additionally added certain assumptions, such as choice of initial data and its order of compatibility and well-preparedness, to emphasize the convergence order of the method.

\begin{theorem}\label{lem:error_k_norm}
	Let the localization parameter be chosen such that $k\sim \log(1/H)$. Then, the error between the solution $u(t)\in V$ to \eqref{eq:reference_equation_weak_form} and the solution $\unmsHk \in \VmsHk$ to \eqref{eq:localized_method} satisfies
	\begin{align}
		\begin{split}
		\max_{1\leq n\leq N-1}\knorm{\unmsHkhp - &u(\tnhp)} \\
		&\lesssim H\big( \linfmnorm{f} + \linfmnorm{D^2_t u} + \linfknorm{u} + T \linfknorm{D^2_t u} \big) \\
		& \qquad + \tau^2 \big(\lonemnorm{D^4_tu} + \lonemnorm{D^2_t f} + H\loneknorm{D^2_tu} \big) \\
		& \qquad + \mnorm{\fdiscd u^{\mathrm{ms},0}_{H,k} - \Rmsk \fdiscd u(t_0)} + \knorm{u^{\mathrm{ms},1/2}_{H,k} - \Rmsk u(\tohp)}.
		\end{split}\label{eq:error_k_norm}
	\end{align}
	
\end{theorem}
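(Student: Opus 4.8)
The plan is to run the classical Larsson--Thom\'ee energy argument for the averaged (Newmark) scheme, transplanted to the multiscale space $\VmsHk$ through the localized Ritz projection $\Rmsk$. I would first split the error at the integer nodes as $\unmsHk - u(\tn) = \thn + \rhon$, where $\rhon := \Rmsk u(\tn) - u(\tn)$ is the projection error and $\thn := \unmsHk - \Rmsk u(\tn)\in\VmsHk$ is the discrete part. The projection error needs no dynamics: recombining it at the half-node and invoking \eqref{eq:rmsk_knorm_u} of Lemma~\ref{lem:localized_ritz_projection_bounds} for $(\Rmsk-I)\tfrac12(u(\tnnext)+u(\tn))$ produces exactly the three terms $H(\linfmnorm{f} + \linfmnorm{D^2_t u} + \linfknorm{u})$. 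The real work is to bound $\thnhp$ in the $K$-norm.

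To that end I would derive the discrete equation for $\thn$. Testing \eqref{eq:localized_method} with $v\in\VmsHk$, subtracting the averaged scheme evaluated at $\Rmsk u(\tn)$, and using the Ritz identity $(K\Rmsk w, v) = (Kw,v)$ together with the weak form \eqref{eq:reference_equation_weak_form} at $\tn$, gives
\begin{align}
(M\fdiscd\bdiscd\thn, v) + (K\tfrac12(\thnhp + \thnhm), v) = (M(\omni + \omnii), v), \quad v\in\VmsHk,
\end{align}
with the consistency error split into a temporal part $\omni$ and a projection-in-time part $\omnii = -\fdiscd\bdiscd\rhon$. The temporal part carries the second-difference defect $\fdiscd\bdiscd u(\tn) - \ddu(\tn)$ together with the $K$-weighted averaging remainder $\tfrac14(u(\tnnext)-2u(\tn)+u(\tnprev)) = \tfrac14\Qnddu$. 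The decisive manipulation is that this $K$-remainder is \emph{not} estimated in the $K$-norm but converted to a mass quantity: differentiating the strong form \eqref{eq:refmethod} twice in time yields $K\ddu = M(\ddf - \ddddu)$ pointwise, hence $K\Qnddu = M(\Qnddf - \Qnddddu)$, so this contribution enters $\omni$ as an admissible $M$-source.

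Next I would apply the energy-conserving stability estimate of Lemma~\ref{lem:discreteschemeestimate}, which holds verbatim with test space $\VmsHk$ (as the accompanying Proposition confirms), to obtain
\begin{align}
\max_{1\leq n\leq N-1}\big(\mnorm{\fdiscd\thn} + \knorm{\thnhp}\big) \lesssim \mnorm{\fdiscd\tho} + \knorm{\thhp} + \sumin \tau\,\mnorm{\omii + \omiii}.
\end{align}
The temporal source is estimated through the remainder \eqref{eq:Theta}: since $\mnorm{\Theta^i_g}\lesssim \tau\int_{\tihm}^{\tihp}\mnorm{g}\,\ds$, summation gives $\sumin\tau\mnorm{\omii}\lesssim \tau^2(\lonemnorm{D^4_t u} + \lonemnorm{D^2_t f})$, the mixed $\tau^2 H\loneknorm{D^2_t u}$ term arising when these remainders are additionally routed through the Ritz estimate. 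For $\omiii = -(\Rmsk-I)\fdiscd\bdiscd u(\ti)$, the bound $\mnorm{(\Rmsk-I)w}\lesssim H\knorm{w}$ from \eqref{eq:rmsk_mnorm_v} combined with $\knorm{\fdiscd\bdiscd u(\ti)}\lesssim \tau^{-1}\int_{\tiprev}^{\tinext}\knorm{\ddu}\,\ds$ yields $\sumin\tau\mnorm{\omiii}\lesssim H\loneknorm{D^2_t u}\leq HT\linfknorm{D^2_t u}$, which is the origin of the $T$-factor. The initial contributions reduce to $\mnorm{\fdiscd u^{\ms,0}_{H,k} - \Rmsk\fdiscd u(t_0)}$ and $\knorm{u^{\ms,1/2}_{H,k} - \Rmsk u(\tohp)}$ up to $O(\tau^2)$ averaging remainders, after which a triangle inequality recombines $\thnhp$ with the half-node projection error to close the estimate.

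I expect the main obstacle to be the consistency bookkeeping rather than any single inequality: one must organize the splitting so that the $K$-weighted averaging remainder is turned into a mass term by the twice-differentiated PDE (otherwise it is only controllable in an unavailable $K$-norm), and must handle the projection-in-time defect $\fdiscd\bdiscd\rhon$ whose temporal summation is what produces the factor $T$. All of this has to be carried out strictly within $\VmsHk$ so that the Ritz orthogonality and the estimates of Lemma~\ref{lem:localized_ritz_projection_bounds} apply, with the localization chosen as $k\sim\log(1/H)$ so that the exponential decay factor of Lemma~\ref{lem:decay_of_localized_corrector} is absorbed into the $O(H)$ terms.
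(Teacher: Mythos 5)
Your proposal is correct and takes essentially the same route as the paper's proof: the splitting $e^n=\thnk+\rhonk$ via the localized Ritz projection $\Rmsk$, the decisive conversion of the $K$-weighted averaging remainder into an $M$-source using $K\ddu=M(\ddf-\ddddu)$ (producing $\Qndddduddf$), the energy stability of Lemma~\ref{lem:discreteschemeestimate} applied in $\VmsHk$, and the Taylor/Ritz bounds on the consistency sums and initial-data terms all coincide with the paper's argument. The only cosmetic deviation is that you bound $\omiii$ directly by $H\knorm{\fdiscd\bdiscd u(\ti)}$ via \eqref{eq:rmsk_mnorm_v}, whereas the paper first splits $\fdiscd\bdiscd u(\ti)$ into $\ddu(\ti)$ plus a Taylor remainder; both yield the stated estimate.
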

\begin{proof}
	Define the error as
	\begin{align}
	e^n := \unmsHk - u(\tn) = \unmsHk - \Rmsk u(\tn) + \Rmsk u(\tn) - u(\tn) = \thnk + \rhonk.
	\end{align}
	For $\rhonk$, we know from Lemma~\ref{lem:localized_ritz_projection_bounds} that
	\begin{align}
	\knorm{\rhonkhp} &= \knorm{\Rmsk u(\tnhp) - u(\tnhp)} \\
	&\lesssim H\big(\mnorm{f(\tnhp) - D^2_t u(\tnhp)} + \knorm{u(\tnhp)}\big).
	\end{align}
	For $\thnk$, we note that it satisfies the equation
	\begin{align}
	(M\fdiscd \bdiscd \thnk, &v) + (K\tfrac{1}{2}(\thnkhp + \thnkhm), v) \\
	&= (M\fdiscd \bdiscd \unmsHk, v) - (M\fdiscd \bdiscd \Rmsk u(\tn), v) + (K\tfrac{1}{2}(\unmsHkhp + \unmsHkhm), v)  \\
	& \qquad \qquad - (K\tfrac{1}{2}(u(\tnhp) + u(\tnhm)), v) \\
	&= (M\fn, v) - (M\fdiscd \bdiscd \Rmsk u(\tn), v) - (Ku(\tn), v) - (K\tfrac{1}{2}\Qnddu, v) \\
	&= (M\ddu, v) -  (M\fdiscd \bdiscd \Rmsk u(\tn), v) + (M\tfrac{1}{2}\Qndddduddf, v) \\
	&= (M\omni, v) + (M\omnii, v) + (M\tfrac{1}{2}\Qndddduddf, v)
	\end{align}
	where $\omni:= D^2_t{u}(t_n) - \fdiscd \bdiscd u(\tn)$, $\omnii:= \fdiscd\bdiscd u(\tn) - \fdiscd \bdiscd \Rmsk u(\tn)$, and $\Theta$ is the function defined in \eqref{eq:Theta}. Therefore, by Lemma~\ref{lem:discreteschemeestimate}, we have that 
	\begin{align}
	\mnorm{\fdiscd \thnk} + \knorm{\thnkhp} \leq C\Big( \mnorm{\fdiscd \thok} + \knorm{\thkhp} + \sumin \tau \big(\mnorm{\omii} + \mnorm{\omiii} + \mnorm{\Qidddduddf}\big)\Big).
	\end{align}
	At first, we note that by Taylor expansion, we have
	\begin{align}
	\mnorm{\omii} = \mnorm{\fdiscd \bdiscd u(\ti) - \ddu(\ti)} = \intmnorm{\frac{1}{6\tau^2} \int_{\tiprev}^{\tinext} \ddddu(s)\Lambda_{[\tiprev,\tinext]}^3(s)\, \ds} \leq \frac{\tau}{6}\int_{\tiprev}^{\tinext} \mnorm{\ddddu(s)}\, \ds,
	\end{align}
	with $\Lambda_{[\tiprev,\tinext]}$ as defined in \eqref{eq:lambda_function}, so that the first sum can be bounded as
	\begin{align}
	\sumin \tau \mnorm{\omii} \leq \frac{\tau^2}{6}\sumin \int_{\tiprev}^{\tinext} \mnorm{\ddddu(s)}\, \ds \leq \frac{\tau^2}{3} \int_0^{\tnnext} \mnorm{\ddddu(s)}\, \ds.
	\end{align}
	Moreover, we can write
	\begin{align}
	\mnorm{\omiii} &= \mnorm{\fdiscd \bdiscd u(\ti) - \fdiscd \bdiscd \Rmsk u(\ti)} = \mnorm{(I - \Rmsk)\fdiscd \bdiscd u(\ti)} \\
	&= \mnorm{(I-\Rmsk)(\ddu(\ti) +  \Qiddu)} \leq \mnorm{D^2_t \rhoik} + \mnorm{(I-\Rmsk)\Qiddu},
	\end{align}
	where we can bound the corresponding first sum using Lemma~\ref{lem:localized_ritz_projection_bounds} as
	\begin{align}
	\sumin \tau \mnorm{D^2_t \rhoik} \lesssim H \sumin\tau \knorm{D^2_t u(\ti)}
	\end{align}
	and the second as
	\begin{align}
	\sumin \tau \mnorm{(I-\Rmsk)\Qiddu} &= \sumin \tau \intmnorm{(I-\Rmsk)\int_{\tihm}^{\tihp} \ddu(s)\Lambda_{[\tihm,\tihp]}(s)\, \ds} \\
	&\leq \sumin \frac{\tau^2}{2} \intmnorm{\int_{\tihm}^{\tihp} (I-\Rmsk)\ddu(s)\, \ds} \\
	&\leq \frac{\tau^2}{2} \sumin  \int_{\tihm}^{\tihp} \mnorm{\ddu(s) - \Rmsk\ddu(s)}\, \ds \\
	&\lesssim \tau^2H \int_{\tohp}^{\tnhp} \knorm{D^2_t u(s)}\, \ds,
	\end{align}
	where $\Lambda$ is the function defined in \eqref{eq:lambda_function}. The last part is bounded straight-forwardly as
	\begin{align}
	\sumin \tau \mnorm{\Qidddduddf} &= \sumin \tau \intmnorm{\int_{\tihm}^{\tihp} (\ddddu(s) - \ddf(s))\Lambda_{[\tihm,\tihp]}(s)\, \ds} \\
	&\leq \frac{\tau^2}{2}\sumin \int_{\tihm}^{\tihp} \mnorm{\ddddu(s) - \ddf(s)}\, \ds \\
	&\leq \frac{\tau^2}{2} \int_{\tohp}^{\tnhp} \mnorm{\ddddu(s) - \ddf(s)}\, \ds.
	\end{align}
	In total, $\thn$ satisfies the estimate
	\begin{align}
	\mnorm{\fdiscd \thnk} + \knorm{\thnkhp} &\leq C\Big( \mnorm{\fdiscd \thok} + \knorm{\thkhp} + \frac{\tau^2}{3} \int_0^{\tnnext} \mnorm{\ddddu(s)}\, \ds \\
	& \qquad \qquad \qquad + H \sumin\tau \knorm{D^2_t u(\ti)} \\
	& \qquad \qquad \qquad + \tau^2H \int_{\tohp}^{\tnhp} \knorm{D^2_t u(s)}\, \ds   \\
	& \qquad \qquad \qquad +\frac{\tau^2}{2} \int_{\tohp}^{\tnhp} \mnorm{\ddddu(s) - \ddf(s)}\, \ds\Big).
	\end{align}
	We combine the estimates for $\rhonk$ and $\thnk$, and obtain the total error
	\begin{align}
		\knorm{\enhp_k} &\lesssim H\Big( \mnorm{f(\tnhp)} + \mnorm{D^2_t u(\tnhp)} + \knorm{u(\tnhp)} +  \sumin\tau \knorm{D^2_t u(\ti)} \Big) \\
		& \qquad + \tau^2 \Big( \int_0^{\tnnext} \mnorm{D^4_tu(s)} + \mnorm{D^2_t f(s)} + H\knorm{D^2_t u(s)}\, \ds \Big) \\
		& \qquad + \mnorm{\fdiscd \thok} + \knorm{\thkhp}.
	\end{align}
	Now take the maximum over $n=1,\ldots, N-1$, and we arrive at \eqref{eq:error_k_norm}.
\end{proof}

\begin{theorem}\label{lem:error_m_norm}
	Let the localization parameter be chosen such that $k\sim \log(1/H)$. Then, the error between the solution $\unmsHk \in \VmsHk$ to \eqref{eq:localized_method} and $u\in V$ to \eqref{eq:reference_equation_weak_form} satisfies
	\begin{align}
	\begin{split}
	\max_{1\leq n\leq N-1}\mnorm{\fdiscd \unmsHk - &D_t{u}(\tnhp)} \\
	&\lesssim \mnorm{\fdiscd u^{\mathrm{ms},0}_{H,k} - \Rmsk \fdiscd u(t_0)} + \knorm{u^{\mathrm{ms},1/2}_{H,k} - \Rmsk u(\tohp)} \\
	& \quad + \tau^2 \big( \lonemnorm{D^4_tu} + \lonemnorm{\ddf} + H\loneknorm{D^2_tu} + \linfmnorm{\dddu} \big) \\
	&\quad + H\big( T\linfknorm{D^2_t u} +  \linfknorm{D_tu} \big)
	\end{split}\label{eq:error_m_norm}
	\end{align}
\end{theorem}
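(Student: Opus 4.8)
The plan is to reuse the error splitting $e^n := \unmsHk - u(\tn) = \thnk + \rhonk$ from the proof of Theorem~\ref{lem:error_k_norm}, where $\thnk = \unmsHk - \Rmsk u(\tn)$ and $\rhonk = \Rmsk u(\tn) - u(\tn)$, and to exploit that $\Rmsk$ commutes with the discrete derivative $\fdiscd$, since it is linear and time-independent. First I would decompose the quantity of interest as
\begin{align}
\fdiscd \unmsHk - D_t u(\tnhp) = \fdiscd \thnk + (\Rmsk - I)\fdiscd u(\tn) + \big(\fdiscd u(\tn) - D_t u(\tnhp)\big),
\end{align}
using $\fdiscd \Rmsk u(\tn) = \Rmsk \fdiscd u(\tn)$, and estimate the three resulting terms separately in the $M$-norm.

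For the first term, I would observe that $\thnk$ satisfies exactly the same discrete equation as derived in the proof of Theorem~\ref{lem:error_k_norm}, so that the bound on $\mnorm{\fdiscd \thnk} + \knorm{\thnkhp}$ obtained there via Lemma~\ref{lem:discreteschemeestimate} applies verbatim. In particular, $\mnorm{\fdiscd \thnk}$ is controlled by the initial-data terms $\mnorm{\fdiscd \thok} + \knorm{\thkhp}$, the time-truncation contributions $\tau^2\big(\lonemnorm{D^4_tu} + \lonemnorm{\ddf} + H\loneknorm{D^2_tu}\big)$, and the spatial term $HT\linfknorm{D^2_t u}$ (using $\sumin \tau \leq T$ to convert the sum into the final factor). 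This step accounts for most of the right-hand side of \eqref{eq:error_m_norm}.

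For the second term, I would apply the $M$-norm Ritz estimate \eqref{eq:rmsk_mnorm_v} from Lemma~\ref{lem:localized_ritz_projection_bounds} to the function $\fdiscd u(\tn)\in V$, giving $\mnorm{(\Rmsk - I)\fdiscd u(\tn)} \lesssim H\knorm{\fdiscd u(\tn)}$; writing $\fdiscd u(\tn) = \tau^{-1}\int_{\tn}^{\tnnext} D_t u(s)\,\ds$ as an average of $D_t u$ then bounds this by $H\linfknorm{D_t u}$, which is the remaining $H$-term. For the third, purely temporal term I would Taylor-expand $u(\tnnext)$ and $u(\tn)$ about the midpoint $\tnhp$: the first-order contributions reproduce $\tau D_t u(\tnhp)$ while the leading remainder is of order $\tau^3$ in the third time derivative, so that $\mnorm{\fdiscd u(\tn) - D_t u(\tnhp)} \lesssim \tau^2 \linfmnorm{\dddu}$. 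Collecting the three bounds and taking the maximum over $1\leq n\leq N-1$ yields \eqref{eq:error_m_norm}.

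The main obstacle I anticipate is organizational rather than a genuinely new difficulty: one must check that the commutation $\fdiscd\Rmsk = \Rmsk\fdiscd$ is legitimate and that the $\thnk$-equation is literally identical to the one in Theorem~\ref{lem:error_k_norm}, so that the $\mnorm{\fdiscd\thnk}$ estimate can be imported without redoing the term-by-term analysis of $\omii$, $\omiii$ and $\Qidddduddf$. The only genuinely fresh computation is the temporal consistency error $\fdiscd u(\tn) - D_t u(\tnhp)$, which is a standard centered-difference estimate but must be expanded about $\tnhp$ rather than about $\tn$ in order to recover the optimal $\tau^2$ order carried by $\linfmnorm{\dddu}$.
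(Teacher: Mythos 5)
Your proposal is correct and follows essentially the same route as the paper: the identical three-term splitting $\fdiscd \thnk + \fdiscd \rhonk + \big(\fdiscd u(\tn) - D_t{u}(\tnhp)\big)$, the same import of the $\mnorm{\fdiscd \thnk}$ bound from the proof of Theorem~\ref{lem:error_k_norm}, and the same centered-difference (midpoint Taylor) estimate $\mnorm{\fdiscd u(\tn) - D_t{u}(\tnhp)} \lesssim \tau^2 \linfmnorm{\dddu}$. Your only deviation is in the middle term, where you apply \eqref{eq:rmsk_mnorm_v} directly to $\fdiscd u(\tn) \in V$ and bound $\knorm{\fdiscd u(\tn)} \leq \linfknorm{D_tu}$ via the integral-mean representation; this is legitimate and in fact slightly cleaner than the paper, which Taylor-expands $\fdiscd u(\tn)$ about $\tn$ and applies the Ritz estimate separately to $D_t u(\tn)$ and $D^2_t u$, picking up an extra (absorbable) $H\tau \linfknorm{D^2_t u}$ term.
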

\begin{proof}
	We can split the error as
	\begin{align}
	\fdiscd \unmsHk - D_t{u}(\tnhp) &= \fdiscd \unmsHk - \fdiscd \Rmsk u(\tn) + \fdiscd \Rmsk u(\tn) - \fdiscd u(\tn) + \fdiscd u(\tn) - D_t{u}(\tnhp) \\
	&= \fdiscd \thnk + \fdiscd \rhonk + \fdiscd u(\tn) - D_t{u}(\tnhp).
	\end{align}
	From the proof of Lemma~\ref{lem:error_k_norm}, we already know that
	\begin{align}
	\mnorm{\fdiscd \thnk}  &\lesssim  \mnorm{\fdiscd \thok} + \knorm{\thkhp} + H \sumin\tau \knorm{D^2_t u(\ti)}  \\
	& \qquad  + \tau^2\Big( \int_0^{\tnnext} \mnorm{\ddddu(s)} + \mnorm{\ddf(s)} + H\knorm{D^2_t u(s)}\, \ds \Big) .
	\end{align}
	For $\rhonk$, we first note that
	\begin{align}
	\fdiscd u(\tn) = D_t{u}(\tn) + \frac{1}{\tau}\int_{\tn}^{\tnnext} \ddu(s)(\tnnext-s)\, \ds.
	\end{align}
	Therefore we have, using Lemma \ref{lem:localized_ritz_projection_bounds} with $k\sim \log(1/H)$
	\begin{align}
	\mnorm{\fdiscd \rhonk} &= \mnorm{(I-\Rmsk)\fdiscd u(\tn)} \\
	&\leq \mnorm{D_t{u}(\tn) - \Rmsk D_t{u}(\tn)} + \frac{1}{\tau}\intmnorm{(I-\Rmsk)\int_{\tn}^{\tnnext} \ddu(s)(\tnnext-s)\, \ds} \\
	&\leq \mnorm{D_t{u}(\tn)-\Rmsk D_t{u}(\tn)} + \int_{\tn}^{\tnnext} \mnorm{\ddu(s) -\Rmsk \ddu(s)}\, \ds \\
	&\lesssim H\Big( \knorm{D_t u(\tn)} + \int_{\tn}^{\tnnext} \knorm{D^2_tu(s)}\, \ds \Big) \\
	&\leq  H\Big( \knorm{D_t u(\tn)} + \tau \max_{[\tn,\tnnext]} \knorm{D^2_tu(s)} \Big).
	\end{align}
	For the last part, we can once again use Taylor expansion to get
	\begin{align}
	\mnorm{\fdiscd u(\tn) - D_t{u}(\tnhp)} &= \intmnorm{\frac{1}{2\tau} \int_{\tn}^{\tnnext} \dddu(s)\Lambda_{[\tn,\tnnext]}^2(s)\, \ds} \\
	&\leq \frac{\tau}{8}\int_{\tn}^{\tnnext} \mnorm{\dddu(s)}\, \ds \leq \frac{\tau^2}{8}\max_{[\tn,\tnnext]}\mnorm{\dddu(s)}.
	\end{align}
	Together we have the estimate
	\begin{align}
	\mnorm{\fdiscd \unmsHk - D_t{u}(\tnhp)} &\lesssim \mnorm{\fdiscd \thok} + \knorm{\thkhp} + \tau^2 \max_{[\tn,\tnnext]} \mnorm{\dddu(s)} \\
	& \qquad + \tau^2 \Big( \int_0^{\tnnext} \mnorm{D^4_tu(s)} + \mnorm{\ddf(s)} + H\knorm{D^2_tu(s)}\,\ds \Big) \\
	&\qquad + H\Big( \sumin \tau \knorm{D^2_t u(\ti)} + \tau \max_{[\tn,\tnnext]} \knorm{D^2_t u(s)} + \knorm{D_tu(s)} \Big).
	\end{align}
	Take the maximum over $n=1,\ldots,N-1$ and we obtain \eqref{eq:error_m_norm}.
\end{proof}
We finish this section by proving the main result of this paper, which follows by a combination of Theorem~\ref{lem:error_k_norm} and Theorem~\ref{lem:error_m_norm}, together with additional assumptions on the initial data. 
\begin{corollary} \label{cor:comp_conv_rate}
	Let the localization parameter be chosen such that $k\sim \log(1/H)$, and assume the initial values of the proposed method, $u^{\mathrm{ms},0}_{H,k}, u^{\mathrm{ms},1}_{H,k}$, are chosen such that 
	\begin{align}
		\mnorm{\fdiscd u^{\mathrm{ms},0}_{H,k} - \Rmsk \fdiscd u(t_0)} + \knorm{u^{\mathrm{ms},1/2}_{H,k} - \Rmsk u(\tohp)} \leq C(H + \tau^2).
		\label{eq:assumption_on_initial_data}
	\end{align}
	Moreover, assume the data is well-prepared and compatible of order 3 in the sense of Definition \ref{def:well_prepared}. Then, the error between the approximate solution $\unmsHk$ to \eqref{eq:localized_method} and the reference solution $u(\tn)$ to \eqref{eq:reference_equation_weak_form}, satisfies
	\begin{align}
		\mnorm{\fdiscd \unmsHk - D_t{u}(\tnhp)} + \knorm{\unmsHkhp - u(\tnhp)} \leq C(H + \tau^2),
	\end{align}
	where the constant $C$ is independent of the complex features inherited by the network structure.
\end{corollary}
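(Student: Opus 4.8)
The plan is to combine the two preceding theorems additively and then show that, under the compatibility/well-preparedness assumption of order $3$ together with the assumption \eqref{eq:assumption_on_initial_data} on the discrete initial data, every term on the right-hand sides of \eqref{eq:error_k_norm} and \eqref{eq:error_m_norm} is bounded by $C(H+\tau^2)$ with $C$ independent of the network's fine-scale features. First I would simply add the estimates of Theorem~\ref{lem:error_k_norm} and Theorem~\ref{lem:error_m_norm}, taking the maximum over $1\le n\le N-1$ on both summands; since both bounds already carry the same structural terms (an $O(H)$ spatial part, an $O(\tau^2)$ temporal part, and the two initial-data mismatch terms), the sum is controlled by the union of these contributions.

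The core of the argument is then to absorb each appearing norm into the single constant $C_{w,f}$ from Definition~\ref{def:well_prepared}. The initial-data terms $\mnorm{\fdiscd u^{\mathrm{ms},0}_{H,k} - \Rmsk \fdiscd u(t_0)}$ and $\knorm{u^{\mathrm{ms},1/2}_{H,k} - \Rmsk u(\tohp)}$ are dispatched immediately by the hypothesis \eqref{eq:assumption_on_initial_data}. For the remaining spatial and temporal factors, I would invoke the regularity result of Lemma~\ref{lem:regularity_result}: each quantity of the form $\linfmnorm{D^{j}_t u}$, $\linfknorm{D^{j}_t u}$, $\lonemnorm{D^{j}_t u}$, $\loneknorm{D^j_t u}$, and $\lonemnorm{D^j_t f}$ appearing for $j\le 4$ in $u$ (equivalently $j\le 2$ in $f$, since $D^4_t u$ relates to $D^2_t f$ through the equation) can be bounded via Lemma~\ref{lem:regularity_result} in terms of $\mnorm{w_{m+1}}$, $\knorm{w_m}$, and $\lonemnorm{D^m_t f}$ for $m\le 3$. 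Well-preparedness of order $3$ is precisely what guarantees these are all bounded by $C_{w,f}$, which by definition does not depend on the complex network features. Here one has to check that the highest derivative appearing, $D^4_t u$, is covered: applying Lemma~\ref{lem:regularity_result} with $m=2$ bounds $\mnorm{D^3_t u}$ and $\knorm{D^2_t u}$, and with $m=3$ bounds the $L^1(M)$-norm of $D^4_t u$ through $\lonemnorm{D^3_t f}$ together with $\mnorm{w_4}$ and $\knorm{w_3}$, all of which are finite under order-$3$ compatibility.

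The main obstacle I anticipate is bookkeeping rather than deep mathematics: one must verify that the order of well-preparedness demanded (here, $3$) is exactly sufficient to bound every derivative norm that surfaces, and in particular that the interplay $D^4_t u \leftrightarrow D^2_t f$ via the governing equation \eqref{eq:ref_method_with_higher_derivatives} is handled consistently so that no term slips through requiring higher regularity than assumed. A secondary point worth stating explicitly is that the factor $T$ multiplying $\linfknorm{D^2_t u}$ (and the $\int_0^{t_{n+1}}$ time integrals, which are bounded by the corresponding $L^1(0,T;\cdot)$ norms) is harmless since $T$ is fixed and finite, so these terms remain $O(H)$ and $O(\tau^2)$ respectively. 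Once each term is seen to be uniformly bounded, collecting the $H$-order and $\tau^2$-order contributions yields the stated bound $C(H+\tau^2)$, completing the proof.
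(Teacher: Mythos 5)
Your proposal is correct and takes essentially the same route as the paper's own proof: the paper likewise sums the bounds of Theorem~\ref{lem:error_k_norm} and Theorem~\ref{lem:error_m_norm}, absorbs the initial-data terms via \eqref{eq:assumption_on_initial_data}, extends Lemma~\ref{lem:regularity_result} to the $L^1$ and $L^\infty$ Bochner norms (with the harmless factor $T$), and closes the argument with order-$3$ well-preparedness, where the $\tau^2$-terms require exactly $m=2,3$ and the $H$-terms $m=0,1,2$, just as in your bookkeeping. Your explicit check that $\lonemnorm{D^4_t u}$ is controlled through Lemma~\ref{lem:regularity_result} with $m=3$ (i.e.\ via $\mnorm{w_4}$, $\knorm{w_3}$, $\lonemnorm{D^3_t f}$) matches the paper's use of the sum $\sum_{j=2}^{3}\big(\mnorm{w_{j+1}}+\knorm{w_j}+\lonemnorm{D^j_t f}\big)$.
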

\begin{proof}
	Combining the results from Theorem~\ref{lem:error_k_norm} and Theorem~\ref{lem:error_m_norm}, we have the full expression
	\begin{align}
	\begin{split}
		&\max_{1\leq n\leq N-1} \big( \mnorm{\fdiscd \unmsHk - D_t{u}(\tnhp)} + \knorm{\unmsHkhp - u(\tnhp)} \big) \\
		&\qquad\lesssim 	\mnorm{\fdiscd u^{\mathrm{ms},0}_{H,k} - \Rmsk \fdiscd u(t_0)} + \knorm{u^{\mathrm{ms},1/2}_{H,k} - \Rmsk u(\tohp)} \\
		&\qquad \qquad + \tau^2\big(  \lonemnorm{D^4_tu} + \lonemnorm{\ddf} + H\loneknorm{D^2_tu} + \linfmnorm{\dddu} \big) \\
		&\qquad \qquad + H\big( \linfmnorm{f} + \linfmnorm{D^2_t u} + \linfknorm{u} + (T+\tau)\linfknorm{D^2_t u} + \linfknorm{D_tu}  \big).
	\end{split}\label{eq:error_expr}
	\end{align}
	The first two terms in the right-hand side expression are immediately bounded by the assumption \eqref{eq:assumption_on_initial_data}. For the remaining terms, we will rely on the regularity result from Lemma~\ref{lem:regularity_result}, together with the well-preparedness and compatibility of the data. First of all, the result in Lemma~\ref{lem:regularity_result} can be extended to the Bochner space norms as
	\begin{align}
			\lonemnorm{D^{m+1}_t{u}} + \loneknorm{D^{m}_tu} &\lesssim T\big(\mnorm{w_{m+1}} + \knorm{w_m} + \lonemnorm{D^{m}_tf}\big), \\
			\linfmnorm{D^{m+1}_t{u}} + \linfknorm{D^{m}_tu} &\lesssim \mnorm{w_{m+1}} + \knorm{w_m} + \lonemnorm{D^{m}_tf}.
	\end{align}
	By applying this result to all relevant terms in \eqref{eq:error_expr}, together with the bound from the initial data assumption, we get the simplified error estimate
	\begin{align}
		&\max_{1\leq n\leq N-1} \big( \mnorm{\fdiscd \unmsHk - D_t{u}(\tnhp)} + \knorm{\unmsHkhp - u(\tnhp)} \big) \\
		&\qquad\lesssim_T C(H+\tau^2) + H\Big( \linfmnorm{f} + \sum_{j=0}^2 \big( \mnorm{w_{j+1}} + \knorm{w_j} + \lonemnorm{D^j_t f} \big) \Big)  \\
		&\qquad \qquad + \tau^2\sum_{j=2}^3\big( \mnorm{w_{j+1}} + \knorm{w_j} + \lonemnorm{D^j_t f} \big).
	\end{align}
	Finally, by the assumption that the data is well-prepared and compatible of order 3, the sought result follows by Definition~\ref{def:well_prepared}.
\end{proof}

\begin{remark}[Choice of initial data] \label{remark:good_initial}
	The assumption made in \eqref{eq:initial_data} on the convergence of the initial data can be obtained by, e.g., choosing $u^{\mathrm{ms}, 0}_{H,k} = \Rmsk g$ and $u^{\mathrm{ms}, 1}_{H,k} = \Rmsk g + \tau \Rmsk h + \tfrac{\tau^2}{2}D^2_tu(0)$ with $D^2_tu(0) = f(0)-M^{-1}Kg$. This can be shown by inserting these expressions into \eqref{eq:initial_data} and writing $u(t_1)$ as a Taylor expansion in terms of $u(0)$.
\end{remark}

\section{Numerical examples}\label{s:numerical_examples}
\begin{figure}
\centering
\includegraphics[width = 0.8\textwidth]{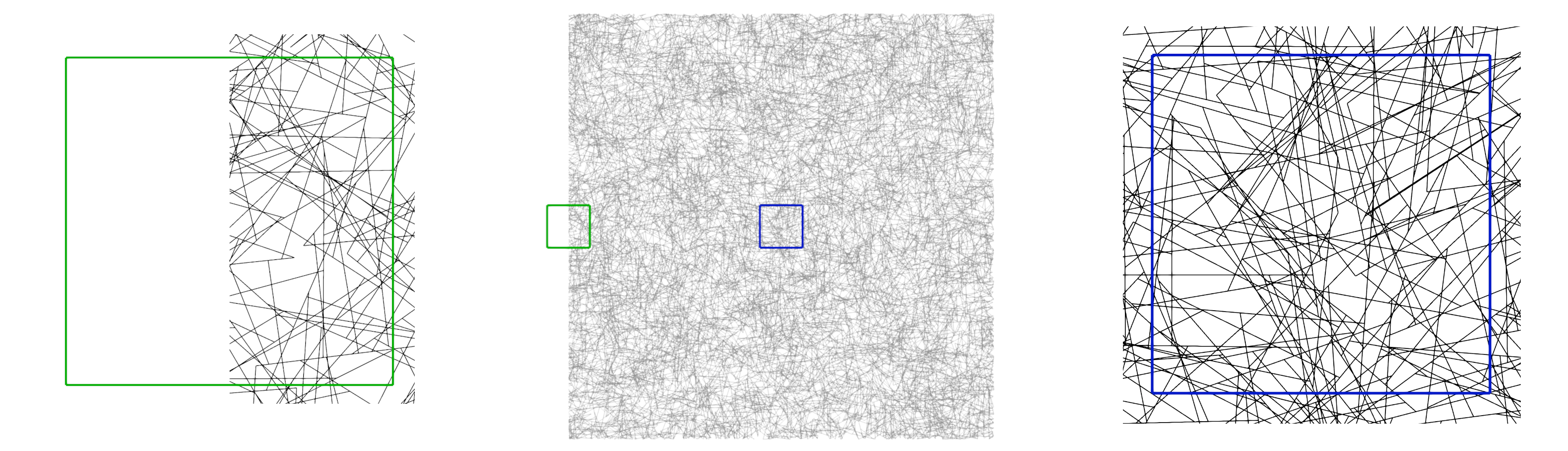}
\caption{The network used in the numerical examples. The entire network is shown in the center figure, with smaller areas (illustrated by the colored frames) presented in detail on the sides.}
\label{fig:numnetwork}
\end{figure}
The LOD method \eqref{eq:localized_method} and the error bound in Corollary~\ref{cor:comp_conv_rate} are evaluated numerically for three different problems on the form \eqref{eq:refmethod}. In all three numerical examples, the same two-dimensional network is used. The first two problems evaluate the method for the operator presented in Example~\ref{example:weighted_laplacian}. Both a homogeneous and an inhomogeneous wave equation are considered. The third example is related to elastic wave propagation. 

The network used in the numerical examples is created by randomly placing
straight line segments in the unit square. The network generation is performed in four steps. First, line segments of length $r=0.07$ are randomly placed with their midpoints in the extended domain $[-0.5r,1+0.5r]^2$, with a random rotation. The placement of the line segments and their orientation are chosen from a uniform distribution, and anything placed outside of the unit square is removed. These segments are added until the total length of the line segments placed is  \com{700}. After this step, the intersections of the line segments are found. These segments are then connected by placing nodes in all intersections, with any disconnected parts of the network removed. Then, any nodes closer than $ \com{10^{-3}}r$ are merged to get a lower bound on the lengths of the edges. The final step removes all edges that do not have a fixed endpoint and are connected at only one point to the bulk. This process results in networks with around \com{150 000} nodes. An illustration of the network is presented in Figure~\ref{fig:numnetwork}. In \cite{EdeGHKM22, GorHM22}, it is numerically shown that  \com{this type of network} satisfies Assumption~\ref{ass:network_properties}.

For all numerical examples, the LOD method used to approximate the problems has localization parameter $k=\log_2(1/H)$, and the performance is evaluated for grid sizes $H = 2^{-i}$, $i=2,3,4,5$. \com{In the numerical examples we use a slightly modified interpolation operator. When computing the nodal variables we evaluate $(M_{T_j}\psi_j,v)$ for all elements $T$ in the support of the basis function $\varphi_j$, rather than just picking one, and then take the average. Since the interpolation bound in Lemma~\ref{lem:interpolation_bound} holds for any choice of element $T_j$ in the construction of the nodal variables, it also holds for the average. This choice of interpolant gives better results when $H$ is fairly large, to a small additional cost.} 

\subsection{Scalar wave equation}

In the first two examples, the operator $K$ defined in Example~\ref{example:weighted_laplacian} is used with each edge coefficient, $\gamma_{xy}$, chosen at random in the interval $[0.1,0.9]$.

\subsubsection{The homogeneous wave equation}

In order to get a problem with a known exact solution, we pick the initial value of $u$ to be the solution to the corresponding generalized eigenvalue problem, $Kw=\lambda w$. We pick the sixth eigenvalue $\lambda_6\approx  \com{16}$ with corresponding eigenvector $w_6$. Through this construction, we also get well-prepared initial data. We consider

\begin{equation}\label{eq:eignum}
\begin{cases}
MD_t^2u + Ku = 0, \ t \in [0,\pi/\sqrt{\lambda_6}]\\
u = 0, \ x \in \Gamma = \{ x\in\mathcal{N} : x_1 \in \{0,1\} \},\\
u(0)  = w_6, \ D_tu(0) = 0.
\end{cases}
\end{equation}
The solution to this problem is $u(t) = \cos(\sqrt{\lambda_6}t)w_6$, which has half a period in the given time interval. An illustration of the exact solution in this time interval can be found in Figure~\ref{fig:eigsol}. 

\begin{figure}
\centering
\includegraphics  {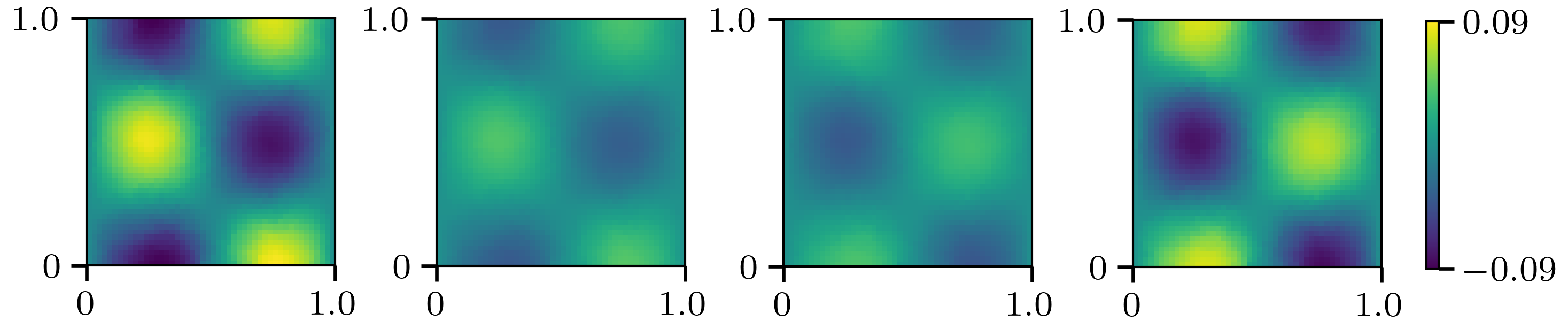}
\caption{The exact solution to \eqref{eq:eignum} at $t=\alpha \pi/\sqrt{\lambda_6}$, $\alpha = 0, \tfrac{1}{3}, \tfrac{2}{3}, 1$.}
\label{fig:eigsol}
\end{figure}

The solution to this problem is approximated using the LOD method \eqref{eq:localized_method} with a fine time discretization of $\tau = 10^{-3}$. The two initial conditions are chosen with the formula presented in Remark~\ref{remark:good_initial}, i.e.,
$$
u^{\mathrm{ms}, 0}_{H,k} = \Rmsk u(0) = \Rmsk w_6, \ \  
u^{\mathrm{ms}, 1}_{H,k} = \Rmsk \left(w_6-\lambda_6\frac{\tau^2}{2}w_6\right), 
$$
using that $M^{-1}Ku(0) = M^{-1}Kw_6 = \lambda_6M^{-1}Mw_6 = \lambda_6w_6$. The convergence results for these simulations are presented in Figure \ref{fig:eignumconv}. We see optimal order convergence in both $M$-norm and $K$-norm. Note that we present absolute values of the errors. In order to understand the relative size of the error compared to the solution, we note that the (preserved) energy of the solution fulfills
$$
|D_t u(t)|^2_M+|u(t)|^2_K=
|D_t u(0)|^2_M+|u(0)|^2_K=
|w_6|^2_K=\lambda_6 |w_6|^2_M=
\lambda_6\approx  \com{16}.
$$

\begin{figure}
\centering
\includegraphics [scale = 0.8]{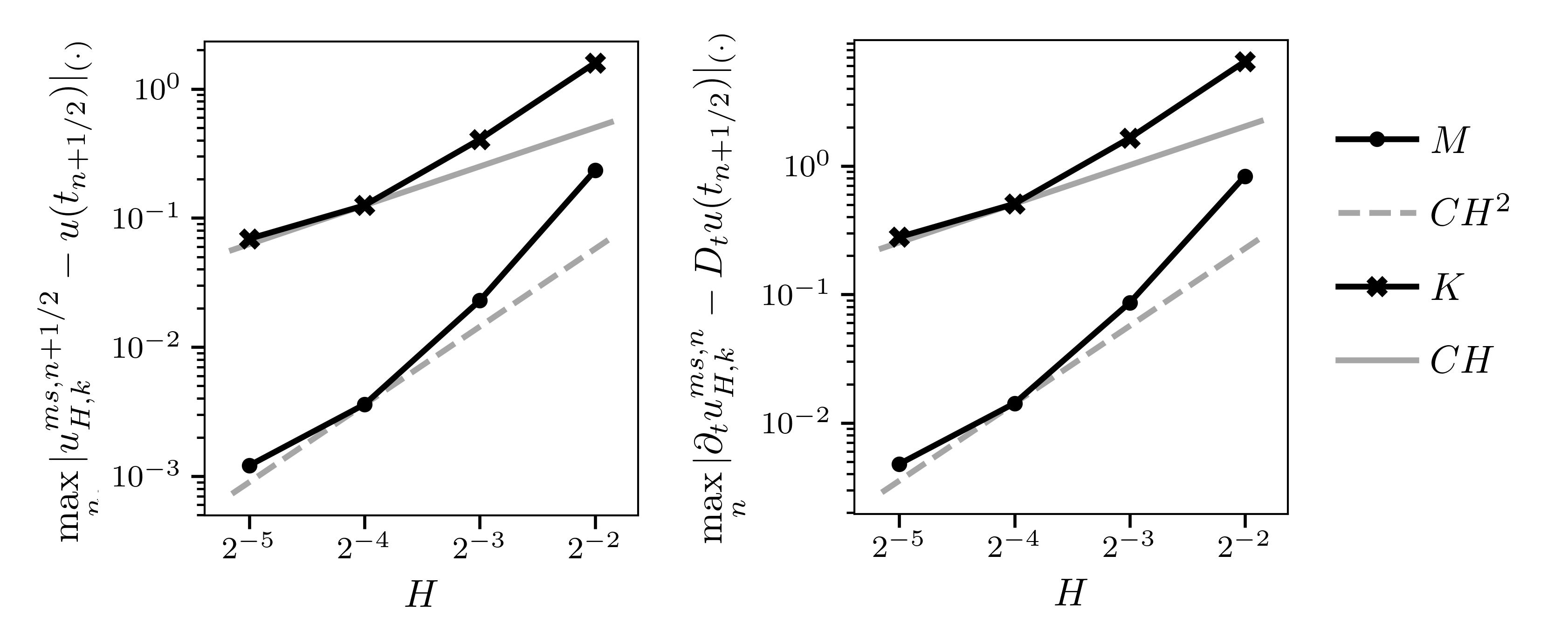}
\caption{The convergence of the LOD method. In the left figure, the error bound in Theorem~\ref{lem:error_k_norm} is observed with $H$ convergence in the $K$-norm, and in the right figure, the error bound in Theorem~\ref{lem:error_m_norm} is surpassed with $H^2$ convergence in the $M$-norm.}
\label{fig:eignumconv}
\end{figure}

\subsubsection{The inhomogeneous wave equation}
The second numerical example uses the same network and $K$ as in \eqref{eq:eignum}. Whereas the LOD approximations in the previous example were compared to an exact solution, the following problem is compared to a reference solution $\un$ computed on the full network discretization. We consider the problem

\begin{equation}\label{eq:gennum}
\begin{cases}
MD_t^2u + Ku = M\left(\sin(2\pi t)\cdot 1\right), \ t\in [0,2]\\
u = 0, \ x \in \partial\Omega,\\
u(0)  = 0, \ D_tu(0) = 0,
\end{cases}
\end{equation}
where $1 \in V$.

\begin{figure}
\centering
\includegraphics {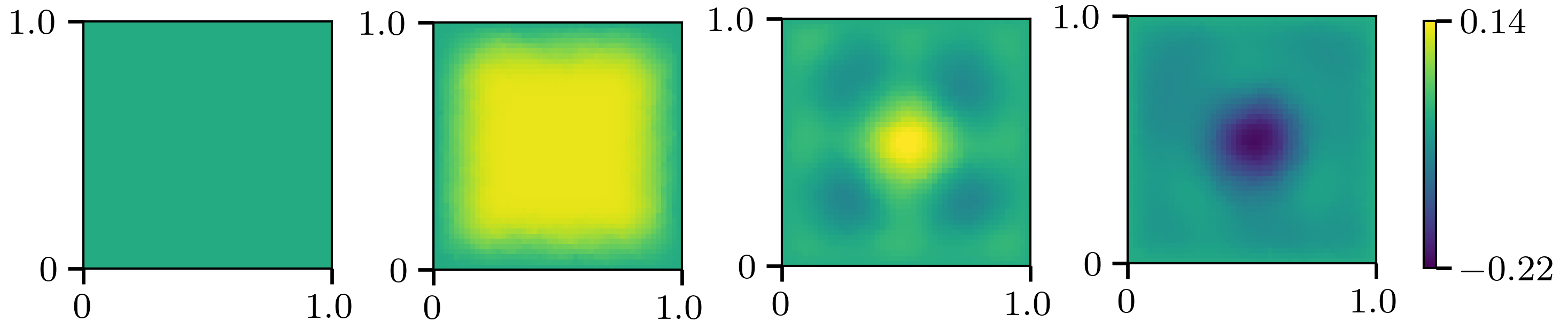}
\caption{The reference solution $u^n$ of \eqref{eq:gennum} at $t=0, \tfrac{2}{3}, \tfrac{4}{3}, 2$.}
\label{fig:hfig}
\end{figure}

The reference solution $u^n$ used for this problem is the discretization scheme presented in \eqref{eq:discretemethodref}. Again, the time discretization has high resolution with $\tau = 2 \cdot 10^{-3}$. This reference solution is presented in Figure~\ref{fig:hfig} at four different times. The problem is also approximated using the LOD method \eqref{eq:localized_method} with the same $\tau$. In the LOD method the initial conditions $u^{\mathrm{ms},0}_{H,k}=u^{\mathrm{ms},1}_{H,k}=0$ are used and motivated by Remark~\ref{remark:good_initial}. The convergence of the LOD method to the discrete solution $u^n$ can be found in Figure~\ref{fig:hconv}. Again we detect optimal order convergence in both $M$-norm and $K$-norm. 

\begin{figure}
\centering
\includegraphics [scale = 0.8] {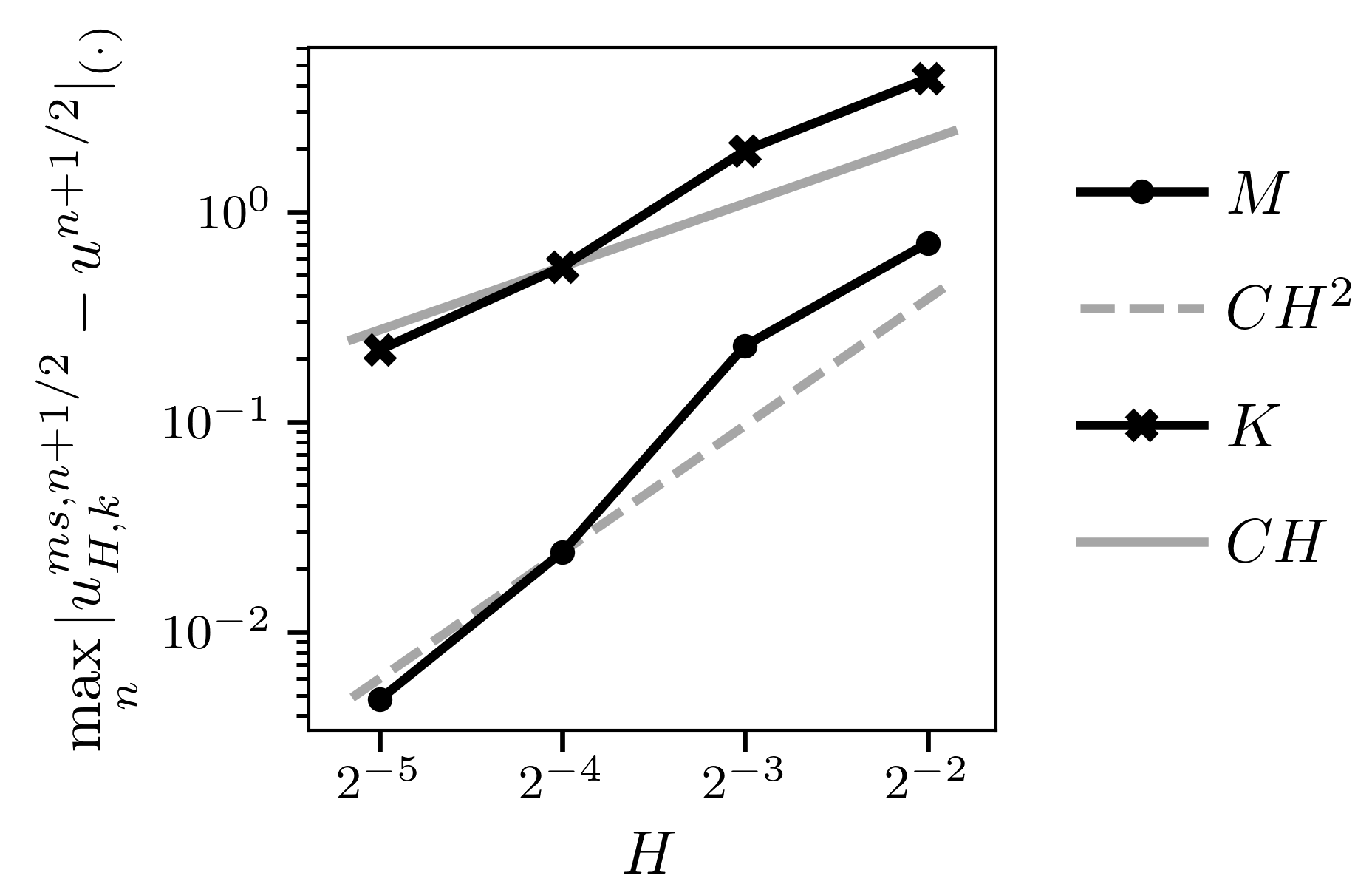}
\caption{The convergence of the LOD method to the reference solution $u^n$ of \eqref{eq:gennum}. In these results, it is evident that the LOD approximation converges linearly with respect to $H$ in the $K$-norm and quadratically in the $M$-norm.}
\label{fig:hconv}
\end{figure}

\subsection{Elastic wave equation}
An elasticity problem is evaluated in the last numerical example using the same network as in the previous two examples. In this example, the network is interpreted as a $\SI{1}{\meter}\times\SI{1}{\meter}$ mesh of steel cylindrical wires with radius $r_w =  \com{\SI{0.5}{\milli\meter}}$. The linear operator $\mathbf{K}:\mathbf{V}\rightarrow \mathbf{V}$ maps three-dimensional displacements of nodes to three-dimensional forces resulting from the displacement. Bold notation is used to emphasize vector-valued spaces, operators, and functions as presented in Remark \ref{remark:vec-not}, and for this specific case, the solutions space is $\textbf{V}=V^3$.

The operator $\mathbf{K}$ is composed of two additive parts, the first ($\textbf{KE}$) modeling the tensile forces resulting from strain and compression, and the second ($\textbf{KB}$) the forces resulting from bending the fibers. Here, we present the definitions of the operators. For a more detailed explanation of this model, we refer to \cite{KetMMFWE20}. 
 
\subsubsection{Elasticity model}

The tensile forces are modeled using a linearized version of Hooke's law as
\begin{equation} \label{eq:spring}
    (\textbf{KE}_x\textbf{v},\textbf{v}) =\frac{1}{2} \sum_{y \sim x} \gamma_{\text{KE}}  \frac{((\textbf{v}(x)-\textbf{v}(y))^T\partial_{xy})^2}{|x - y|},
\end{equation}
where $\partial_{xy} = (x-y)/|x-y|$ is the direction of the edge, and the coefficient $\gamma_{\text{KE}} = E\cdot A$ is Young's modulus of the wire times its cross-section area. In this example, Young's modulus is $\SI{210}{\giga\pascal}$.

For the second component, bending stiffness is modeled using a linearized Euler--Bernoulli model. This operator is composed of two parts, $\textbf{KB} = \textbf{KB}^{(1)} + \textbf{KB}^{(2)}$, where $\textbf{KB}^{(1)}$ models forces resulting from bending in the plane of the network, and $\textbf{KB}^{(2)}$ models forces resulting from bending out of the plane. These two operators are defined as
\begin{equation} \label{eq:fiber} 
  \begin{split}
    &(\textbf{KB}^{(k)}_x\textbf{v},\textbf{v}) = \\
    &\sum_{\substack{y\sim x \wedge z\sim x \\ y \not=z}} \gamma_{xyz} \frac{|x-y|+|x-z|} {2} \left(\frac{(\textbf{v}(y)-\textbf{v}(x))^T\eta_{xyz}^{y,(k)}}{|x-y|}+\frac{(\textbf{v}(z)-\textbf{v}(x))^T\eta_{xyz}^{z,(k)}}{|x-z|}\right)^2,
  \end{split}
\end{equation}
for $k = 1, \ 2$, where $\eta_{xyz}^{y,(2)} = \eta_{xyz}^{z,(2)}  = [0,0,1]^T$, $\eta_{xyz}^{y,1} = [0,0,1]^T \times \partial_{xy}, \ \eta_{xyz}^{y,1} = [0,0,1]^T \times \partial_{xz}$, and $\gamma_{xyz} = EI(|x-y|+|x-z|)^{-2}$ where $I = 0.25\pi r_w^4 = 0.25Ar_w^2$ is the second moment of area of the wire. 

The two types of coefficients $\gamma_{xy}$ and $\gamma_{xyz}$ satisfy the relation
 $$\gamma_{xyz} = EA\frac{r_w^2}{4(|x-y|+|x-z|)^{2}} = \gamma_{\text{KB}}\frac{r_w^2}{4(|x-y|+|x-z|)^{2}},$$
where the magnitude of the forces resulting from bending compared to tensile forces depends on the radius of the wire and the lengths of the edges, which is expected. 

With these operators defined, we can define the operator $\textbf{K}$ used in this numerical example as
\begin{equation} \label{eq:ellastic_operator}
  (\textbf{K} \textbf{v},\textbf{v}) =  \sum_{x\in\mathcal{N}} (\textbf{K}_x \textbf{v},\textbf{v}), \ \ (\textbf{K}_x \textbf{v},\textbf{v}) = (\textbf{KE}_x\textbf{v},\textbf{v}) +(\textbf{KB}^{(1)}_x\textbf{v},\textbf{v})+(\textbf{KB}^{(2)}_x\textbf{v},\textbf{v}).
\end{equation}

The coercivity assumption on $\textbf{K}$ for this operator is true depending on the geometry of the network, with the network required to be rigid. Moreover, at least $3$ nodes spanning a plane need to be in $\Gamma$. \com{We can compute $\alpha$ and $\beta$ numerically by considering the generalized eigenvalue problem $(\textbf{K}\textbf{v},\textbf{w})=\lambda(\textbf{L}\textbf{v},\textbf{w})$. By inverse power iteration and power iteration respectively, the smallest eigenvalue was found to be $\alpha = 11.6343$ and the largest $\beta = 642\ 657$. This spread can be compared to the coefficients  $\gamma_{\text{KE}} \approx 40 \ 000$ and $\gamma_{xyz}$ that are bounded by: 
$$13\leq \gamma_{xyz} = EA\frac{r_w^2}{4(|x-y|+|x-z|)^2} \leq 135\ 000,$$
using that $7\cdot10^{-5}\leq|x-y|\leq7\cdot10^{-3}$.}  An LOD method has been numerically validated for this model for elliptic problems in~\cite{EdeGHKM22}.

\subsubsection{The inhomogeneous elastic wave equation}
The following numerical example interprets the network as a mesh of steel wires fixed at one side with varying applied force. We consider the equation
\begin{equation}\label{eq:ellast_prob}
\begin{cases}
\textbf{M} D_t^2\textbf{u} + \textbf{K}\textbf{u} = \textbf{M}\left(\com{10^5x_1^2}\sin(\com{0.4}\pi t)\cdot \mathbf{1}_z\right), \ t\in [0,\com{10}]\\
\textbf{u} = 0, \ x \in \Gamma = \{ x\in\mathcal{N} :x_1 = 0\},\\
\textbf{u}(0)  = 0, \ D_t\textbf{u}(0) = 0,
\end{cases}
\end{equation}
with $\textbf{K}$ defined as in \eqref{eq:ellastic_operator}, $\textbf{M}$ represents an application of the mass matrix in each coordinate direction, and $\mathbf{1}_z(x) = [0,0,1]^T$. Similarly to the second numerical example, we measure the error with respect to a reference solution $\textbf{u}^n$ computed on the full network with the same time discretization. \com{In this example,  we set $\tau = 0.01$}. The reference solution $\textbf{u}^n$ is presented at three times in Figure~\ref{fig:SN}. 

The reference solution $\textbf{u}^n$ of \eqref{eq:ellast_prob} is compared to the approximation calculated using the LOD method \eqref{eq:localized_method} with the same $\tau$. Both the $\textbf{u}^{\mathrm{ms},0}_{H,k}$ and $\textbf{u}^{\mathrm{ms},1}_{H,k}$ terms vanish. 
The convergence rate between reference solution and approximation with respect to mesh size $H$, is depicted in Figure~\ref{fig:SNConv}. We again detect optimal order convergence in both $\textbf{M}$-norm and $\textbf{K}$-norm.

We conclude that the LOD method can be extended to the network setting and be used for efficient solutions to acoustic and elastic wave propagation problems posed on spatial networks. 

\begin{figure}
\centering
\includegraphics{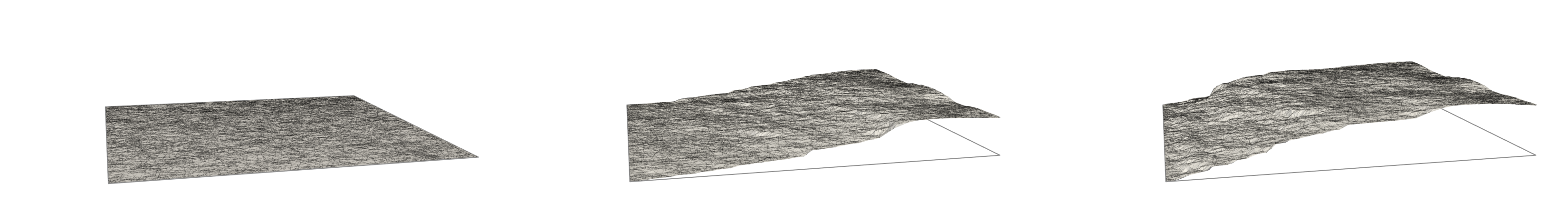}
\caption{Reference solution $\textbf{u}^n$ of \eqref{eq:ellast_prob} at $t=\com{0, 5, 10}$. The surface is the network model overlayed on a transparent triangularization for visualization purposes. The square outline is the boundary of the non-displaced network.}
\label{fig:SN}
\end{figure}

\begin{figure}
\centering
\includegraphics{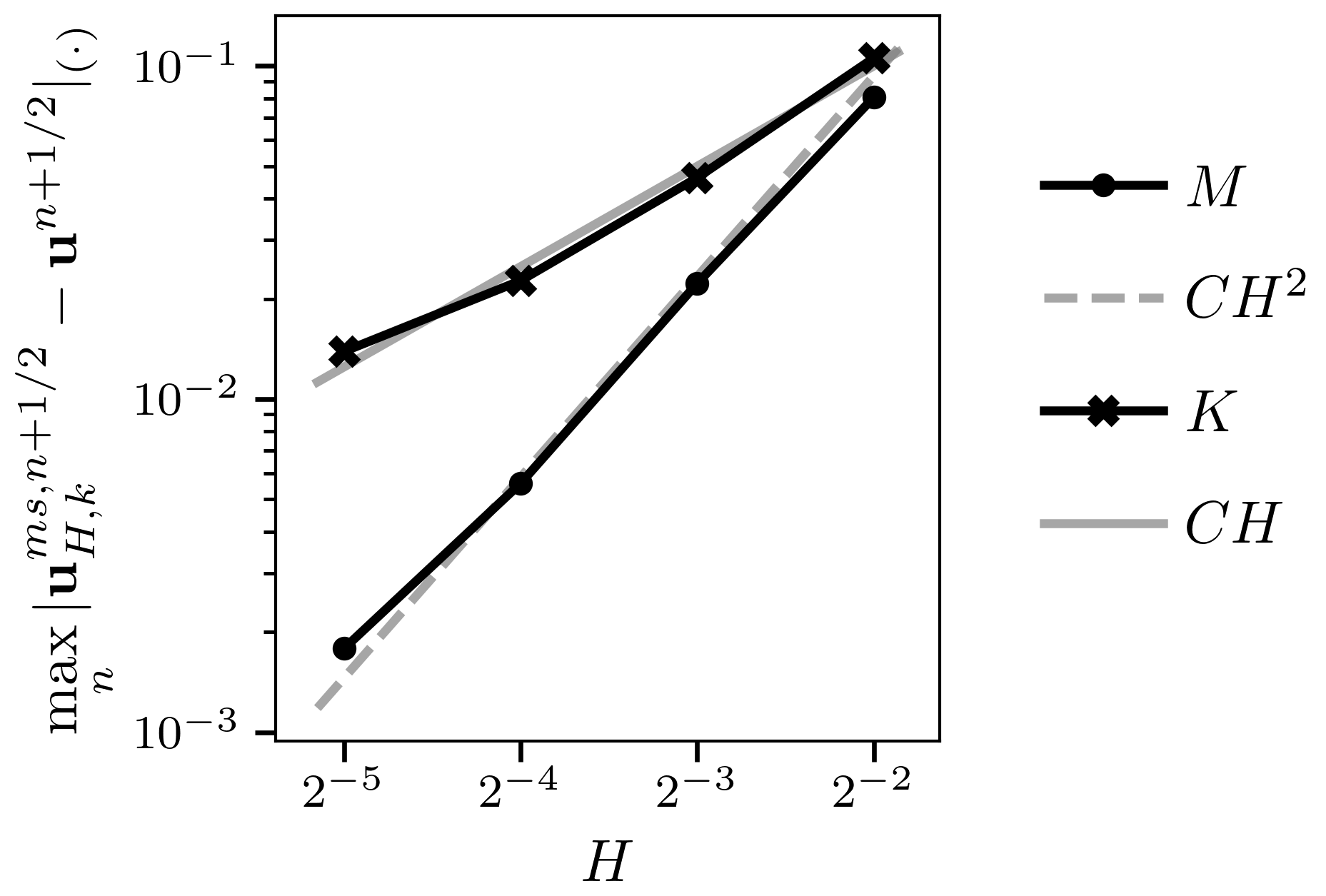}
\caption{The convergence of the LOD method to the reference solution $\textbf{u}^n$ of \eqref{eq:ellast_prob}. Similarly to the convergence results for the inhomogeneous problem \eqref{eq:gennum}, the LOD approximation converges linearly with respect to $H$ in the $\textbf{K}$-norm and quadratically in the $\textbf{M}$-norm.}
\label{fig:SNConv}
\end{figure}


\bibliographystyle{abbrv}
\bibliography{References}

\end{document}